\newtheorem{thm}{Theorem}
\newtheorem{prop}{Proposition}
\newtheorem{lma}[prop]{Lemma}
\newtheorem{conj}[prop]{Conjecture}
\theoremstyle{definition}
\theoremstyle{remark}
\newtheorem{rmk}[prop]{Remark} 
\newtheorem{exa}[prop]{Example}
\def\mrm#1{{\mathrm{#1}}}
\def\bb#1{{\mathbb{#1}}}
\def\cl#1{{\mathcal{#1}}}
\newcommand{\R}{{\mathbb{R}}}
\newcommand{\Z}{{\mathbb{Z}}}
\newcommand{\C}{{\mathbb{C}}}
\newcommand{\Q}{{\mathbb{Q}}}
\newcommand{\N}{{\mathbb{N}}}
\newcommand{\bK}{{\mathbb{K}}}
\newcommand{\sm}[1]{C^\infty(#1)}
\newcommand{\om}{\omega}
\newcommand{\al}{\alpha}
\newcommand{\eps}{\epsilon}
\newcommand{\ol}[1]{\overline{#1}}
\renewcommand{\hat}[1]{\widehat{#1}}
\renewcommand{\geq}{\geqslant}
\renewcommand{\leq}{\leqslant}
\DeclareMathOperator{\id}{\mathrm{id}}
\DeclareMathOperator{\Ham}{\mathrm{Ham}}
\DeclareMathOperator{\Symp}{\mathrm{Symp}}
\DeclareMathOperator{\gr}{\mathrm{gr}}
\def\H2{H^{(2)}}
\def\F{\mathbb F}
\newcommand{\esemail}{egor.shelukhin@umontreal.ca}
\begin{document}



\title[Symplectic Hilbert-Smith conjecture]{A symplectic Hilbert-Smith conjecture}

\author[Egor Shelukhin]{Egor Shelukhin\smallskip\\
\MakeLowercase{with an appendix by} Leonid Polterovich}

\email{\esemail}
\address{Department of Mathematics and Statistics, University of Montreal, C.P. 6128 Succ.  Centre-Ville Montreal, QC H3C 3J7, Canada \vspace{0.5cm}}

\email{ polterov@tauex.tau.ac.il}
\address{School of Mathematical Sciences, Tel Aviv University, Tel Aviv 69978, Israel}

\begin{abstract}
We prove new cases of the Hilbert-Smith conjecture for actions by natural homeomorphisms in symplectic topology. Specifically, we prove that the group of $p$-adic integers $\Z_p$ does not admit non-trivial continuous actions by Hamiltonian homeomorphisms, the $C^0$ limits of Hamiltonian diffeomorphisms, on symplectically aspherical symplectic manifolds. For a class of symplectic manifolds, including all standard symplectic tori, we deduce that a locally compact group acting faithfully by homeomorphisms in the $C^0$ closure of time-one maps of symplectic isotopies must be a Lie group. Our methods of proof differ from prior approaches to the question and involve barcodes and power operations in Floer cohomology.  They also apply to other natural metrics in symplectic topology, notably Hofer's metric. An appendix by Leonid Polterovich uses this to deduce obstructions on Hamiltonian actions by semi-simple $p$-adic analytic groups. \end{abstract}	


\date{June 25, 2024}
\maketitle

\section{Introduction and main results}

\subsection{Main results}

Hilbert's fifth problem, characterizing Lie groups purely topologically among the locally compact topological groups, was solved by Gleason \cite{Gleason1, Gleason2}, Montgomery-Zippin \cite{MZ, MZ-book}, with further work of Yamabe \cite{Yamabe1, Yamabe2} (see also \cite{Lee, Tao-book} for surveys). However, a generalization of Hilbert's fifth problem, called the Hilbert-Smith conjecture, is still largely open. Let us now recall its formulation. An action of a group $G$ by homeomorphisms on a topological space $X$ is called effective if the corresponding map $\rho: G \to \mrm{Homeo}(X)$ is injective. Endow $\mrm{Homeo}(X)$ with the compact-open topology; it is induced by the $C^0$ metric \[ d_{C^0}(f,g) = \max_{x \in M} d(f(x),g(x))\] in the case where $X$ is a closed manifold with an auxiliary Riemannian metric. An effective continuous action $\rho: G \to \mrm{Homeo}(X)$ of $G$ on $X$ by homeomorphisms is called a faithful action. 

\begin{conj}[Hilbert-Smith]\label{conj: HS}
If a locally compact topological group $G$ acts faithfully on a connected $n$-manifold $X$ then $G$ is a Lie group.
\end{conj}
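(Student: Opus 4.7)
The plan is to reduce the conjecture to the well-known special case of $p$-adic actions. By the solution to Hilbert's fifth problem (Gleason, Montgomery--Zippin, Yamabe), a locally compact $G$ is a Lie group if and only if it has no small subgroups, i.e.\ some identity neighborhood contains no nontrivial subgroup. If $G$ is locally compact but not a Lie group, the Gleason--Yamabe approximation theorem together with the structure theory of pro-Lie groups produces a closed subgroup of $G$ isomorphic to the $p$-adic integers $\Z_p$ for some prime $p$. Since a faithful action restricts to a faithful action of any closed subgroup, it suffices to prove: \emph{$\Z_p$ admits no faithful action by homeomorphisms on a connected $n$-manifold $X$}.

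Assume for contradiction such an embedding $\Z_p \hookrightarrow \mrm{Homeo}(X)$. The approach is to exploit the inverse system $\Z_p = \varprojlim \Z/p^k$ and derive a contradiction from classical restrictions on finite $p$-group actions. Three inputs enter: (i) Smith theory controls the mod-$p$ cohomology of the fixed point sets $X^{\Z/p^k}$ and forces them to be nonempty of controlled dimension; (ii) Newman's theorem asserts that any nontrivial action of a finite group on a connected manifold admits an orbit of diameter bounded below by a positive constant depending only on $X$ (and an auxiliary metric), so the orbits of the finite quotients cannot uniformly shrink as $k \to \infty$; (iii) dimension-theoretic results of Yang and Bing bound the integral cohomological dimension of the orbit space $X/\Z_p$, which one wishes to show exceeds $\dim X = n$, contradicting $\dim X = n$.

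The hard part is step (iii) in the topological category: it is precisely here that the Hilbert--Smith conjecture remains open for $n \geq 4$. The conjecture is established for $n \leq 3$ (with the decisive case $n=3$ due to Pardon), for smooth and Lipschitz actions (Repov\v{s}--\v{S}\v{c}epin, Martin), and for quasiconformal actions; the available purely topological estimates on $\dim_{\Z} X/\Z_p$ fall short of the required strict inequality. Absent a new topological input there is, at present, no avenue to a general proof.

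Within the present paper's scope, the proposal is therefore to augment the above classical framework with dynamical invariants from symplectic topology. In a symplectic manifold carrying a rich Floer theory, one can replace (ii)--(iii) by a quantitative obstruction: Newman's lower bound on orbit size has a Floer-theoretic avatar formulated via persistence barcodes, and power operations in Floer cohomology provide $\Z/p^k$-equivariant upper bounds on the spectral or barcode norm of the generator of a hypothetical $\Z_p$-action. If this spectral upper bound is shown to contradict the barcode lower bound on any nontrivial element sufficiently close to the identity in $C^0$, the desired contradiction follows for actions by Hamiltonian homeomorphisms, and more generally by $C^0$-limits of symplectic isotopies. The main obstacle I expect is establishing the $C^0$-continuity and $\Z/p^k$-equivariance of these Floer-theoretic invariants simultaneously, so that finite $p$-power approximations $\Z_p \to \Z/p^k$ can be fed into the power-operation machinery to yield the clean contradiction.
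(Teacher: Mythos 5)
This statement is Conjecture~\ref{conj: HS}, not a theorem: the paper offers no proof of it, and indeed states explicitly that the Hilbert--Smith conjecture ``is still largely open.'' You correctly recognize this. Your reduction to the non-existence of faithful $\Z_p$-actions via Gleason--Montgomery--Zippin--Yamabe matches the paper's Conjecture~\ref{conj: Zp} and Remark~\ref{rmk: equiv}, your survey of the classical inputs (Smith theory, Newman's theorem, Yang's bound on the orbit space) matches what the paper cites as the prior approaches, and your candid admission that step (iii) is exactly where the general case breaks down for $n \geq 4$ is accurate. So there is no gap in the sense of a false claim --- you have not claimed to prove the conjecture --- and your write-up is an honest assessment rather than a proof.

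One substantive correction to your sketch of the symplectic special case (the paper's Theorem~\ref{cor: SHS1}): the power operations are not applied to the finite quotients $\Z/p^k$ of $\Z_p$. Instead, the paper fixes $k$ so that elements of $p^k\Z_p$ have small $\gamma_{\Z}$-norm, then uses Dirichlet's theorem to produce infinitely many \emph{auxiliary} primes $q \equiv 1 \bmod p^k$; for each such $q$ the element $\rho(q^{-1})$ is a $q$-th root of $\phi = \rho(1)$ whose spectral norm is bounded below uniformly in $q$, and the $\Z/q$-equivariant inequality of Theorem~\ref{thm: ineq} (with $\F_q$ coefficients) then forces $q \cdot \gamma(\psi_q;\F_q)$ to be bounded, a contradiction as $q \to \infty$. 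The slogan is ``small subgroups in $\Z_p$ imply large roots,'' and the cyclic groups entering the Smith-theoretic power operations are $\Z/q$ for these auxiliary primes $q$, not the $p$-power quotients of $\Z_p$. Your proposal as written, feeding the approximations $\Z_p \to \Z/p^k$ directly into the power-operation machinery, would not produce the needed divergence, since the relevant Floer-theoretic comparison is between an element and its $q$-th power over $\F_q$, and one also needs Lemma~\ref{lma: Lefschetz} to pass uniformly between $\Q$ and $\F_q$ coefficients for large $q$.
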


The methods of the solution of Hilbert's fifth problem imply that this conjecture is equivalent to the following.

\begin{conj}\label{conj: Zp}
The topological group of $p$-adic integers $\Z_p$ does not act faithfully on a connected $m$-manifold.
\end{conj}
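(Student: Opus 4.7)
The plan is to proceed by contradiction: suppose $\Z_p$ admits a faithful continuous action $\rho$ by homeomorphisms on a connected $m$-manifold $X$. Since $\Z_p = \varprojlim \Z/p^k\Z$ is the inverse limit of its finite cyclic quotients, $\rho$ restricts to a compatible tower of actions of $\Z/p^k\Z$, each of which must be faithful for $k$ large enough (otherwise the kernel of $\rho$ would contain one of the open subgroups $p^k\Z_p$, contradicting effectiveness). The first step would be to localize: by continuity of $\rho$, one produces compact invariant neighborhoods on which, for each large $k$, the generator of $\Z/p^k\Z$ acts as a non-trivial periodic homeomorphism of a compact region of $X$. This reduces the problem to an asymptotic statement about periodic homeomorphisms of $X$ with arbitrarily large $p$-power order.

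Next, I would bring in classical Smith theory, which for each $\Z/p\Z$-action on a cohomologically reasonable space relates the mod-$p$ \v Cech cohomology of the total space, its fixed set, and the orbit space; combined with Newman's theorem, which forbids periodic homeomorphisms of a connected manifold from having arbitrarily small orbits everywhere, one obtains dimensional and cohomological constraints on each $\Z/p^k\Z$-action. Passing to the limit along the tower and tracking orbit type data in the spirit of Yang, Bredon, Williams, and Raymond, the goal is to show that the orbit space $X/\Z_p$ carries too much cohomology: concretely, one aims to prove $\dim_{\Z} X/\Z_p \leq m$, which, together with the standard exact sequence controlling the quotient by a profinite (hence zero-dimensional) group, forces $\rho$ to have a non-trivial kernel and contradicts faithfulness.

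The main obstacle is exactly this final cohomological dimension estimate. The classical results give only $\dim_{\Z} X/\Z_p \leq m+2$, and closing the gap to $m$ is essentially the whole content of Conjecture~\ref{conj: Zp}; it has resisted attack outside the smooth, Lipschitz, or low-dimensional regimes. To get traction, I would seek an invariant $\nu(g)$ attached to each periodic homeomorphism $g \in \rho(\Z_p)$ of $p$-power order, taking values in $\R_{\geq 0} \cup \{\infty\}$, with two competing properties: (i) $\nu$ is continuous (or lower semicontinuous) in the $C^0$ topology, so that $\nu(g_k) \to \nu(\mathrm{id}) = 0$ as $g_k \to \mathrm{id}$ in $\mrm{Homeo}(X)$; and (ii) $\nu$ grows without bound on the powers of a topological generator of $\Z_p$, forced by a Smith-theoretic or power-operation mechanism applied to the periodicity of order $p^k$. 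The existence of such an invariant would be incompatible with the convergence $g^{p^k} \to \mathrm{id}$ in $\Z_p$, and would close the argument.

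In the symplectic Hamiltonian setting treated in the body of this paper, the Floer-theoretic barcode together with $p$-power operations furnishes precisely such a $\nu$; for Conjecture~\ref{conj: Zp} in full generality, the decisive task is to construct an analogous topological invariant of periodic homeomorphisms of an arbitrary manifold -- perhaps via Steenrod operations on local cohomology, constructible sheaves, or equivariant $K$-theory with $p$-adic coefficients. This construction is where I expect the real difficulty to lie, and where the symplectic methods of the present paper suggest a template rather than a solution.
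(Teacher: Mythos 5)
First, a point of order: the statement you were asked to prove is labelled a \emph{conjecture} in the paper, and the paper contains no proof of it. Conjecture~\ref{conj: Zp} is the general Hilbert--Smith conjecture in its $\Z_p$ form, known only for $m\leq 3$ and for actions with extra regularity; the body of the paper proves special cases in the symplectic category (Theorems~\ref{cor: SHS1}, \ref{thm: Hofer}, \ref{cor: SHS2}) by a mechanism --- the $\gamma$-norm, barcodes, and $\F_q$-equivariant power operations applied to $q$-th \emph{roots} $\rho(q^{-1})$ of $\rho(1)$ for primes $q\equiv 1 \bmod p^k$ --- that does not pass through orbit-space cohomological dimension at all. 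So there is no ``paper's proof'' to match, and your proposal, which you yourself concede leaves the decisive step unconstructed, is a survey of the classical strategy rather than a proof. That is the honest situation, but it means the proposal cannot be accepted as an argument for the statement.

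Beyond the admitted gap, there is a concrete error in your reduction. You claim that $\rho$ ``restricts to a compatible tower of actions of $\Z/p^k\Z$'' and that for large $k$ a generator ``acts as a non-trivial periodic homeomorphism of a compact region of $X$.'' But $\Z_p$ is torsion-free, so a faithful action of $\Z_p$ on $X$ contains \emph{no} non-trivial periodic homeomorphisms: an action of the quotient $\Z/p^k\Z$ on $X$ would require the open subgroup $p^k\Z_p$ to act trivially, which contradicts faithfulness (this is exactly the observation the paper makes when noting that small subgroups force large roots, not large periods). In the classical Smith--Yang approach the finite cyclic groups $\Z_p/p^k\Z_p$ act on the intermediate orbit spaces $X/p^k\Z_p$, which are not manifolds, and this is precisely why Newman's theorem and manifold Smith theory cannot be applied directly; moreover Yang's theorem gives that the orbit space of a free $\Z_p$-action has cohomological dimension exactly $m+2$, and the examples of Raymond--Williams show that such dimension-raising quotients do exist for general compacta, so no contradiction follows from dimension theory alone. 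Your proposed invariant $\nu$ --- $C^0$-continuous at the identity yet unbounded along a forced sequence --- is a reasonable abstraction of what the paper actually constructs in the symplectic setting (there $\nu$ is built from $\gamma_{\F_q}$ and the Smith-type inequality of Theorem~\ref{thm: ineq} applied to roots), but in the topological generality of Conjecture~\ref{conj: Zp} no such invariant is known, and producing one is the open problem, not a step you can defer.
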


For one more equivalent version of Conjectures \ref{conj: HS} and \ref{conj: Zp} see \cite[Conjecture 1.3]{Pardon}.

\begin{rmk}\label{rmk: equiv}
It is important for this paper that for a given connected $m$-manifold $X$ this equivalence persists if we consider only actions by homeomorphisms in a fixed subgroup $\cl G$ of $\mrm{Homeo}(X).$
\end{rmk}

The Hilbert-Smith conjecture was proven for $m=1, m=2$ by Montgomery-Zippin \cite{MZ-book} and for $m=3$ by Pardon \cite{Pardon}. For actions by homeomorphisms satisfying additional regularity properties it was proven for $C^2$ actions by Bochner-Montgomery \cite{BM}, for $C^{0,1}$ (ie Lipschitz) actions by Repov\u{s}-\u{S}\u{c}epin \cite{RS}, for $C^{0,\frac{m}{m+2}+\eps}$ actions by Maleshich \cite{Maleshich} and for quasi-conformal actions by Martin \cite{Martin}. We also refer to work of Mj in a metric-theoretic setting \cite{Mj}. Negative results in more general settings, showing the sharpness of the conjecture, were achieved by Raymond-Williams \cite{RW}, Walsh \cite{Walsh}, and Wilson \cite{Wilson}.

An interesting implication of the validity of Conjecture \ref{conj: HS} is a resolution of the following conjecture. Let $\cl G$ be a closed subgroup of $\mrm{Homeo}(X).$ Let $f \in \cl G$ be almost-periodic in the sense that the subgroup $\langle f \rangle = \{ f^k\;|\; k \in \Z \}$ that it generates has compact closure. 

\begin{conj}\label{conj: ap}
Let $f \in \cl G$ be almost periodic. Then there exists an iteration $f^q,$ $q \in \N,$ of $f$ that is included in the image of a homomorphism $\R \to \cl G.$
\end{conj}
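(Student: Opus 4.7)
The plan is to show that Conjecture \ref{conj: ap} is a formal consequence of Conjecture \ref{conj: HS}, via the structure theory of compact abelian Lie groups. First, let $H = \overline{\langle f \rangle} \subset \cl G$; by the almost-periodicity hypothesis $H$ is compact, and it is abelian as the closure of an abelian subgroup. The inclusion $H \hookrightarrow \mrm{Homeo}(X)$ is a faithful continuous action of $H$ on $X$ whose image is contained in $\cl G$, so Conjecture \ref{conj: HS} in the form of Remark \ref{rmk: equiv} applies and forces $H$ to be a Lie group.

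At this point $H$ is a compact abelian Lie group, so by the standard structure theorem $H \cong \T^n \times F$ with $F$ finite abelian; in particular its identity component $H_0$ is a torus $\T^n$ and $H/H_0 \cong F$ is finite. The next key step is to observe that $H/H_0$ is cyclic: the map $\Z \to H$, $k \mapsto f^k$, has dense image by the definition of $H$, and composing with the projection onto the finite discrete group $H/H_0$ the image must already be surjective. Setting $q = |H/H_0|$, we obtain $f^q \in H_0$.

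It remains to embed $f^q$ in a continuous one-parameter subgroup. Since $H_0 \cong \T^n = \R^n/\Z^n$, pick any lift $v \in \R^n$ of $f^q$; then $t \mapsto [tv]$ is a continuous homomorphism $\R \to H_0 \hookrightarrow \cl G$ taking value $f^q$ at $t=1$, which is exactly the homomorphism required by the conjecture.

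The main obstacle is not located in any of these steps, each of which is a standard consequence of the corresponding structural input, but rather in the availability of the Hilbert-Smith conjecture to begin with; the whole argument should be read as a reduction of Conjecture \ref{conj: ap} to Conjecture \ref{conj: HS}. The passage from $f$ to the iterate $f^q$ is forced by the possibly nontrivial component group $H/H_0$, and in general $q$ cannot be reduced to $1$.
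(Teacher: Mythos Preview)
Your argument is correct. Note, however, that the paper does not actually supply a proof of this implication: it merely records Conjecture~\ref{conj: ap} and asserts in the preceding sentence that it is ``an interesting implication of the validity of Conjecture~\ref{conj: HS}.'' What you have written is precisely the standard derivation the paper leaves implicit --- pass to the compact abelian closure $H=\overline{\langle f\rangle}$, apply Hilbert--Smith to conclude $H$ is a compact abelian Lie group, hence $H\cong \T^n\times F$, use density of $\langle f\rangle$ to see that $\pi_0(H)$ is cyclic, and then put the appropriate iterate of $f$ on a one-parameter subgroup of the torus $H_0$.

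One small remark: invoking Remark~\ref{rmk: equiv} is not really necessary here. You are applying Conjecture~\ref{conj: HS} directly to the compact (hence locally compact) group $H$ acting faithfully on $X$ via the inclusion $H\subset\cl G\subset\mrm{Homeo}(X)$; the role of Remark~\ref{rmk: equiv} in the paper is rather to justify reducing Hilbert--Smith-type statements to the $\Z_p$ case within a fixed ambient group $\cl G$, which is not needed for this deduction.
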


It is the goal of this paper to prove new cases of the Hilbert-Smith conjecture for actions on closed symplectic manifolds by homeomorphisms of symplectic nature. Recall that a symplectic manifold $(M,\om)$ is a manifold $M$ of dimension $m=2n$ with a closed, non-degenerate two-form $\om.$ All symplectic manifolds will be assumed to be connected.
We call $(M,\om)$ symplectically aspherical if $\langle [\om], A \rangle = 0, \langle c_1, A \rangle = 0, $ for all $A \in \pi_2(M),$ where $c_1 \in H^2(M;\Z)$ is the first Chern class of the complex vector bundle $(TM, J)$ for any almost complex structure $J$ compatible with $\om$ (according to Gromov \cite{Gromov-phc}, the space of such complex structures is contractible). 

Let $\Ham(M,\om)$ be the group of Hamiltonian diffeomorphisms: namely the time-one maps $\phi=\phi^1_H$ of isotopies $(\phi_H^t)_{t \in [0,1]}$ given by integrating the time-dependent vector fields $X^t_H$ given by the Hamiltonian construction \[ i_{X^t_H} \om = - d H_t,\] where $H_t(x) = H(t,x)$ is a Hamiltonian function, $H \in \sm{S^1 \times M, \R}$ for $S^1 = \R/\Z.$ It is easy to check that $\Ham(M,\om)$ is a subgroup of the group $\Symp_0(M,\om)$ of time-one maps $\psi = \psi^1$ of symplectic isotopies $(\psi^t)_{t \in [0,1]},$ $\psi^0 = \id,$ $(\psi^t)^* \om = \om$ for all $t \in [0,1].$ 

Let $M$ be closed and endow it with an auxiliary Riemannian metric. Following \cite{BHS}, we denote by $\ol{\Ham}(M,\om)$ the group of Hamiltonian homeomorphisms: it is the $C^0$-closure of $\Ham(M,\om)$ inside the group $\mrm{Homeo}(M)$ of homeomorphisms of $M.$ Working in this $C^0$ context we prove the following new cases of the Hilbert-Smith conjecture. 

\begin{thm}\label{cor: SHS1}
Let $(M,\om)$ be a closed symplectically aspherical manifold. There is no non-trivial continuous action of the $p$-adic group $\Z_p$ by Hamiltonian homeomorphisms. 
\end{thm}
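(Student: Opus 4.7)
Suppose for contradiction that $\rho\colon \Z_p \to \ol{\Ham}(M,\om)$ is a non-trivial continuous homomorphism. The strategy is to exploit $p$-divisibility in $\Z_p$ and combine it with a $C^0$-continuous Smith-type obstruction coming from power operations in Floer cohomology.

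The starting point is purely algebraic. Every $a \in p^k \Z_p$ can be written $a = p^k b$ with $b \in \Z_p$, so $\rho(a) = \rho(b)^{p^k}$; hence every element of $\rho(p^k \Z_p)$ admits a $p^k$-th Hamiltonian root. Since $\{p^k \Z_p\}_{k\geq 0}$ is a neighborhood basis of $0$ and $\rho$ is continuous, $\rho(p^k \Z_p)$ shrinks to $\{\id\}$ in $C^0$ as $k \to \infty$. The closed subgroups of $\Z_p$ being $\{0\}$ and $p^k \Z_p$ for $k \geq 0$, the kernel $\ker \rho$ is one of these. If $\ker \rho = p^k \Z_p$ is proper and non-trivial, then $\rho$ factors through $\Z/p^k$ and produces a non-trivial element of $\ol{\Ham}(M,\om)$ of $p$-power order. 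Otherwise $\rho$ is injective, and for each $k$ I pick $a_k \in p^k \Z_p$ with $\rho(a_k) \neq \id$; the elements $\phi_k := \rho(a_k)$ are non-trivial, admit $p^k$-th roots in $\ol{\Ham}(M,\om)$, and satisfy $\phi_k \to \id$ in $C^0$.

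The technical heart is a Smith-type inequality arising from power (Steenrod-type) operations on $\bF_p$-Floer cohomology together with its persistence structure. Concretely, I expect the proof to produce a $C^0$-continuous barcode invariant $I \colon \ol{\Ham}(M,\om) \to [0,\infty)$ with $I(\id) = 0$, together with a quantitative bound: if $\phi \in \ol{\Ham}(M,\om)$ is non-trivial and of the form $\phi = \psi^{p^k}$ (or, analogously, has finite order $p^k$), then
\[
I(\phi) \geq F(k), \qquad F(k) \to \infty.
\]
The $C^0$-continuity of $I$ rests on the extension of Floer-theoretic barcodes from $\Ham$ to $\ol{\Ham}$ available in the symplectically aspherical setting; the divergent lower bound is obtained by iterating a Smith inequality for $\Z/p$-equivariant Floer cohomology through the nested $p$-th roots implicit in the factorization $\psi^{p^k} = (\cdots(\psi^p)^p\cdots)^p$, controlled by the power operations.

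With these two ingredients the contradiction is immediate. In the injective case the inequality gives $I(\phi_k) \geq F(k) \to \infty$, while $C^0$-continuity of $I$ forces $I(\phi_k) \to I(\id) = 0$. In the finite-quotient case the torsion element of $p$-power order is ruled out by the analogous application of the same Smith bound, viewing it as a non-trivial $p^k$-th root of the identity for $k$ arbitrarily large (up to the order of the element), or more directly via the $\Z/p$-equivariant Smith obstruction applied to the subgroup it generates. Either way $\rho$ must be trivial.

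The main obstacle I anticipate is producing the divergent Smith bound on $I$. The classical $\Z/p$-equivariant Smith inequality for a single $p$-th root yields only $k$-independent constraints, comparable in strength to the Arnold-type dimension bound $\dim_{\bF_p} H^*(M;\bF_p)$. Extracting a quantitative lower bound that genuinely grows with the $p$-adic depth $k$ requires the full structure of power operations and their compatibility with the persistence filtration, iterated coherently through the tower of $p$-th roots. Equally delicate is verifying that the resulting invariant $I$ extends $C^0$-continuously from $\Ham(M,\om)$ to its closure $\ol{\Ham}(M,\om)$, so that it can detect the $C^0$-small elements $\phi_k$ produced in the first step.
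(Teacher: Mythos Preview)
Your proposal has a genuine gap: the invariant $I$ you postulate cannot exist. You ask for a $C^0$-continuous $I$ with $I(\id)=0$ and $I(\phi)\geq F(k)\to\infty$ whenever $\phi$ admits a $p^k$-th Hamiltonian root. But every autonomous Hamiltonian diffeomorphism $\phi=\phi^1_H$ satisfies $\phi=(\phi^1_{H/p^k})^{p^k}$ for all $k$, so such an $I$ would be $+\infty$ on a $C^0$-dense set and hence cannot be continuous with $I(\id)=0$. More conceptually, the Smith-type inequality works the other way around: it bounds the \emph{root} in terms of the \emph{power}, not the power in terms of the root. In the form used here (Theorem~\ref{thm: ineq}), for a prime $q$ and $\psi^q=\phi$ one has
\[
q\cdot\gamma_{\F_q}(\psi)\;\leq\;\gamma_{\F_q}(\phi)+2\sum_{2n<i\leq 2nq}\beta_i(\phi;\F_q),
\]
so the presence of a $q$-th root forces $\gamma_{\F_q}(\psi)$ to be \emph{small}, not $\gamma(\phi)$ to be large. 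Iterating this through a tower of $p$-th roots, as you suggest, does not help: the right-hand side involves the barcodes of all the intermediate powers, which you have no control over.

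The paper's argument reverses the logic of your scheme in a rather clever way. One fixes $\phi=\rho(1)$ once and for all and produces, for each prime $q\equiv 1\pmod{p^k}$ (infinitely many by Dirichlet), a $q$-th root $\psi_q=\rho(q^{-1})$. The point of the congruence is that $q^{-1}\equiv 1\pmod{p^k}$ in $\Z_p$, so $\psi_q$ is $C^0$-close to $\phi$, \emph{not} to $\id$; this gives a uniform lower bound $\gamma_{\F_q}(\psi_q)\geq\eps_->0$ via Lemmas~\ref{lma: Lefschetz} and~\ref{lma: reduction}. Feeding this into the displayed Smith inequality (with $\phi$ fixed!) yields $q\,\eps_-\leq C(\phi)$ for all large $q$, a contradiction. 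Two further ingredients you did not anticipate are essential: the coefficient field $\F_q$ varies with $q$, and a Lefschetz-principle comparison (Lemma~\ref{lma: Lefschetz}) is needed to bound the $\F_q$-barcode of the fixed $\phi$ uniformly by its $\Q$-barcode. Thus the slogan is not ``deep $p$-th roots make the power large'' but ``a fixed element cannot have $q$-th roots that stay uniformly far from $\id$ as $q\to\infty$''.
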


Note that in this situation $\ol{\Ham}(M,\om)$ does not have finite subgroups \cite{SW-St} (see also \cite{P-nt}, \cite{AS-nt} for the smooth case), so the main novelty of Theorem \ref{cor: SHS1} is the non-existence of faithful actions. By work of Gleason, Montgomery-Zippin, and Yamabe \cite{Gleason1, Gleason2, MZ, MZ-book, Yamabe1, Yamabe2} Theorem \ref{cor: SHS1} is equivalent to the following.

\begin{thm}\label{cor: SHS1-gr}
If a locally compact group acts faithfully by Hamiltonian homeomorphisms on a closed symplectically aspherical manifold, then it is a torsion-free Lie group.
\end{thm}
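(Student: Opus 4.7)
The plan is to deduce Theorem \ref{cor: SHS1-gr} from Theorem \ref{cor: SHS1} together with the vanishing of finite subgroups in $\ol{\Ham}(M,\om)$ established in \cite{SW-St}. So let $G$ be a locally compact topological group and let $\rho : G \hookrightarrow \ol{\Ham}(M,\om)$ be a faithful continuous action.

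First I would dispose of torsion. Any element $g \in G$ of finite order generates a non-trivial finite cyclic subgroup of $G$, whose image under $\rho$ is again a non-trivial finite subgroup of $\ol{\Ham}(M,\om)$ by faithfulness. Since $\ol{\Ham}(M,\om)$ contains no non-trivial finite subgroups in the symplectically aspherical setting \cite{SW-St}, this forces $g$ to be trivial, so $G$ is torsion-free.

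Next, to show that $G$ is a Lie group I would argue by contradiction using the classical reduction of the Hilbert–Smith conjecture. By the solution of Hilbert's fifth problem \cite{Gleason1, Gleason2, MZ, MZ-book, Yamabe1, Yamabe2}, a locally compact group is Lie if and only if it has no small subgroups. Assuming $G$ is not a Lie group, the Gleason–Yamabe structure theorem provides an open subgroup $H \leq G$ and a compact normal $K \trianglelefteq H$ with $H/K$ a Lie group; since Lie groups have no small subgroups, arbitrarily small subgroups of $H$ are forced into $K$, so $K$ is itself a compact non-Lie group. A compact non-Lie group is a profinite extension of a compact Lie group with infinite totally disconnected part, and a standard pigeonhole/inverse-limit argument in the profinite quotient yields a closed subgroup topologically isomorphic to $\Z_p$ for some prime $p$. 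This is precisely the passage from Conjecture \ref{conj: HS} to Conjecture \ref{conj: Zp}, valid even when restricted to a fixed subgroup of $\mrm{Homeo}(M)$ as noted in Remark \ref{rmk: equiv}. Restricting $\rho$ to this closed $\Z_p \leq G$ produces a non-trivial continuous action of $\Z_p$ by Hamiltonian homeomorphisms, contradicting Theorem \ref{cor: SHS1}. Hence $G$ must be a Lie group.

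The only non-formal ingredient of this deduction is Theorem \ref{cor: SHS1} itself, which is the substantive content of the paper; the hard part lives entirely there. Everything else is a bookkeeping invocation of (i) the no-small-subgroups characterization of Lie groups among locally compact ones, (ii) the extraction of a closed $\Z_p$ inside any non-Lie locally compact group via Gleason–Yamabe and profinite inverse limits, and (iii) the absence of non-trivial finite subgroups in $\ol{\Ham}(M,\om)$ in the symplectically aspherical case.
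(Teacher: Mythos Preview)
Your proposal is correct and follows exactly the route the paper takes: the paper does not give a separate proof of Theorem \ref{cor: SHS1-gr} but simply declares it equivalent to Theorem \ref{cor: SHS1} via the Gleason--Montgomery--Zippin--Yamabe reduction (together with Remark \ref{rmk: equiv}), and obtains torsion-freeness from the absence of finite subgroups in $\ol{\Ham}(M,\om)$ \cite{SW-St}. You have merely spelled out the extraction of the embedded $\Z_p$ in slightly more detail than the paper's one-line citation.
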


\begin{rmk}\label{rmk: Rk} Note that torsion-free Lie groups are precisely the contractible ones. For example, for all natural $k,$ $\R^k$ admits continuous actions on every symplectic manifold by autonomous flows with disjoint supports. In fact $\R^k$ are the only connected torsion-free Lie groups acting faithfully, with respect to the $C^0$ topology \cite{BM}, by $\Ham(M,\om)$ on a closed symplectic manifold (see \cite{Delzant}). We expect that the same is true for actions by $\ol{\Ham}(M,\om)$ but will not prove it here.\end{rmk} 

\begin{rmk}\label{rmk: other}
Analyzing the proof of Theorem \ref{cor: SHS1}, we note that the $C^0$ continuity of the action was used only to show its continuity with respect to the $\gamma_{\Z}$ norm described below and we used the non-degeneracy property of the $\gamma_{\Q}$ norm on $\ol{\Ham}(M,\om).$ Hence the same argument verbatim proves that an analogous statement holds for $\Ham(M,\om)$ but endowed with the $\gamma_{\Z}$ norm or, using using \eqref{eq: ga hof} below, with the Hofer norm \cite{Hofer-metric, Polterovich-book}. (A further technical generalization is discussed in Remark \ref{rmk: completion}.) We record the case of the Hofer metric as Theorem \ref{thm: Hofer}. This result is used in Appendix \ref{app: Leonid} to provide further obstructions on Hamiltonian group actions of $p$-adic groups. \end{rmk}

\begin{thm}\label{thm: Hofer}
Let $(M,\om)$ be a closed symplectically aspherical manifold. There is no non-trivial Hofer-continuous action of the $p$-adic group $\Z_p$ by Hamiltonian diffeomorphisms.  
\end{thm}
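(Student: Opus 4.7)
The plan is to imitate the proof of Theorem~\ref{cor: SHS1} essentially verbatim, replacing $C^0$-continuity of the action with Hofer-continuity throughout. As anticipated in Remark~\ref{rmk: other}, only two features of the ambient topology are actually used in that argument: (i) continuity of the action with respect to the spectral pseudo-norm $\gamma_{\Z}$, and (ii) non-degeneracy of the spectral norm $\gamma_{\Q}$ on the ambient group. I would aim to extract both of these inputs in the Hofer setting and then plug them into the same barcode/power-operation machinery.

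For the first input, I would invoke the domination inequality $\gamma_{\Z}(\phi) \leq \|\phi\|_{\mathrm{Hof}}$ recorded in \eqref{eq: ga hof}, which says that the identity map $(\Ham(M,\om),\mathrm{Hof}) \to (\Ham(M,\om),\gamma_{\Z})$ is $1$-Lipschitz. Consequently, any Hofer-continuous homomorphism $\rho : \Z_p \to \Ham(M,\om)$ is automatically continuous with respect to $\gamma_{\Z}$. For the second input, I would use that $\Ham(M,\om) \subseteq \overline{\Ham}(M,\om)$ and that, under the symplectic asphericity hypothesis, $\gamma_{\Q}$ is non-degenerate on the larger group $\overline{\Ham}(M,\om)$; non-degeneracy on the subgroup $\Ham(M,\om)$ is then immediate.

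With these two reductions in place, I would assume for contradiction that a non-trivial Hofer-continuous $\rho : \Z_p \to \Ham(M,\om)$ exists and set $f := \rho(1)$. Density of $\Z \subset \Z_p$ together with continuity of $\rho$ forces $f \neq \id$, for otherwise $\rho|_{\Z}$ would be trivial and hence so would $\rho$. Since $p^n \to 0$ in $\Z_p$, $\gamma_{\Z}$-continuity yields
\[
\gamma_{\Z}\bigl(f^{p^n}\bigr) \;=\; \gamma_{\Z}\bigl(\rho(p^n)\bigr) \;\longrightarrow\; 0 \quad \text{as } n \to \infty.
\]
The hard part, which is exactly the heart of the proof of Theorem~\ref{cor: SHS1}, is to promote this asymptotic triviality of $p$-power iterates to the triviality of $f$ itself. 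This is where the $p$-th power operation in Floer cohomology enters: it produces a quantitative comparison between an appropriate invariant of the barcode of $\psi^p$ and that of $\psi$, and iterating this comparison $n$ times converts $\gamma_{\Z}(f^{p^n}) \to 0$ into the degeneration of the barcode of $f$. Invoking the non-degeneracy of $\gamma_{\Q}$ on $\Ham(M,\om)$ then forces $f = \id$, contradicting $f \neq \id$.

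I expect the genuinely delicate step to be precisely this last one, namely the control of the barcode of $f$ via barcodes of $f^{p^n}$ through power operations; everything else in the proof of Theorem~\ref{thm: Hofer} is a formal transfer from the $C^0$ to the Hofer setting, made possible by the single inequality \eqref{eq: ga hof}.
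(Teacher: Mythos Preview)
Your reductions (i) and (ii) are exactly right and match the paper's Remark~\ref{rmk: other}: once Hofer-continuity yields $\gamma_{\Z}$-continuity via \eqref{eq: ga hof}, and $\gamma_{\Q}$ is non-degenerate on $\Ham(M,\om)$, the proof of Theorem~\ref{cor: SHS1} transfers verbatim. So if you literally do what your first sentence says and imitate that proof, you succeed.

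However, the sketch you give of what that proof actually \emph{contains} is wrong, and if you followed your sketch rather than the actual argument you would be stuck. The proof of Theorem~\ref{cor: SHS1} does not control the barcode of $f$ through the barcodes of the iterates $f^{p^n}$, and it does not use the $p$-th power operation at all. Instead it exploits the existence of $q$-th \emph{roots} of $\phi=\rho(1)$ for large primes $q\equiv 1\pmod{p^k}$, $q\neq p$: such $q$ are units in $\Z_p$, so $\psi_q:=\rho(q^{-1})$ satisfies $\psi_q^q=\phi$. Theorem~\ref{thm: ineq} is then applied once, with this varying prime $q$, to the pair $(\psi_q,\phi)$. The contradiction comes from letting $q\to\infty$: the left side grows like $q\cdot\eps_-$ because $\gamma_{\F_q}(\psi_q)$ is bounded below independently of $q$ (this is where the $\gamma_{\Z}$-smallness on $p^k\Z_p$ and Lemma~\ref{lma: Lefschetz} enter), while the right side is bounded because the barcode counting function of the \emph{fixed} element $\phi$ is controlled uniformly in $q$.

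Your proposed iterative scheme---apply the $p$-th power inequality $n$ times to pass from $f^{p^n}$ back to $f$---does not close. With the crude bound $\beta_i\leq\gamma$, each application of Theorem~\ref{thm: ineq} at the fixed prime $p$ loses a factor $C=(1+4n(p-1))/p>1$, so after $k$ steps you only obtain $\gamma_{\F_p}(f)\leq C^k\,\gamma_{\F_p}(f^{p^k})$. Mere continuity of $\rho$ gives no decay rate on $\gamma_{\F_p}(f^{p^k})$ beating $C^{-k}$, so nothing follows. The paper's maneuver of taking \emph{roots} and sending the auxiliary prime $q$ to infinity is precisely what circumvents this loss.
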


Using the absence of torsion in $\Ham(M,\om)$ \cite{P-nt} it is immediate from \cite[Section 2.7.2]{Tao-book} (see also \cite[Proofs of Theorem 3.1 and 3.2]{Lee}) that Theorem \ref{thm: Hofer} implies a Hofer-geometric analogue of Theorem \ref{cor: SHS1-gr}.

\begin{thm}\label{cor: Hofer gr}
If a locally compact group acts Hofer-faithfully by Hamiltonian diffeomorphisms on a closed symplectically aspherical manifold, then it is a torsion-free Lie group.
\end{thm}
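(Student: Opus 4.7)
The plan is to deduce Theorem \ref{cor: Hofer gr} from Theorem \ref{thm: Hofer} by the standard reduction coming from the solution of Hilbert's fifth problem, and to handle the torsion-freeness assertion separately using the absence of torsion in $\Ham(M,\om)$ established in \cite{P-nt}. First I would recall the Gleason-Montgomery-Zippin-Yamabe structure theorem in the form cited in \cite[Section 2.7.2]{Tao-book} and \cite[Theorems 3.1, 3.2]{Lee}: any locally compact group which is not a Lie group admits a closed subgroup isomorphic to the $p$-adic integers $\Z_p$ for some prime $p$. (This is essentially the equivalence of Conjectures \ref{conj: HS} and \ref{conj: Zp} made quantitative at the level of a specific action; compare Remark \ref{rmk: equiv}.)

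Assuming towards contradiction that a locally compact group $G$ acts Hofer-faithfully on $(M,\om)$ by Hamiltonian diffeomorphisms but is not a Lie group, I would pick a closed subgroup $H \subset G$ with $H \cong \Z_p$. The faithful Hofer-continuous homomorphism $\rho : G \to \Ham(M,\om)$ restricts to $\rho|_H : \Z_p \to \Ham(M,\om)$, which remains continuous in the Hofer topology and remains injective. Theorem \ref{thm: Hofer} forces $\rho|_H$ to be trivial, contradicting its injectivity on the non-trivial group $\Z_p$. Hence $G$ is a Lie group.

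For the torsion-free statement, I would argue that if $g \in G$ were a non-trivial element of finite order, then faithfulness of $\rho$ would produce a non-trivial torsion element $\rho(g) \in \Ham(M,\om)$. This is excluded by the result of Polterovich \cite{P-nt}, which guarantees that $\Ham(M,\om)$ contains no non-trivial elements of finite order when $(M,\om)$ is symplectically aspherical. Therefore $G$ is torsion-free.

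The main obstacle here is conceptual rather than technical: the substantive input is furnished by Theorem \ref{thm: Hofer} together with \cite{P-nt}, and the only step requiring care is the invocation of the structure theorem to extract a closed $\Z_p$-subgroup from any non-Lie locally compact group, together with the (automatic) preservation of continuity and faithfulness under restriction to a closed subgroup. This is exactly the passage already indicated in the paragraph preceding the theorem.
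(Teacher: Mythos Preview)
Your overall strategy matches the paper's one-sentence argument, but there is a genuine gap in the first step. The assertion that ``any locally compact group which is not a Lie group admits a closed subgroup isomorphic to $\Z_p$'' is false as stated: the compact group $(\Z/2)^{\N}$ is not a Lie group, yet every element has order $2$, so it contains no copy of $\Z_p$. In the classical $C^0$ setting the reduction from Conjecture~\ref{conj: HS} to Conjecture~\ref{conj: Zp} uses Newman's theorem on periods of finite group actions to rule out such torsion phenomena, but Newman's theorem is a $C^0$ statement and is not available for Hofer-continuous actions.

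The fix is exactly what the paper indicates in the sentence preceding the theorem: invoke \cite{P-nt} \emph{before} extracting $\Z_p$, not afterwards. Faithfulness of $\rho$ together with the absence of torsion in $\Ham(M,\om)$ forces $G$ itself to be torsion-free. Now apply Gleason--Yamabe: if $G$ is not Lie it has arbitrarily small non-trivial compact subgroups $K$. Any such $K$ is torsion-free; its identity component, being a torsion-free compact connected group, is trivial, so $K$ is profinite; and a non-trivial torsion-free profinite group contains a copy of $\Z_p$ (the closure of the cyclic group generated by any non-trivial element is an infinite procyclic group). Then Theorem~\ref{thm: Hofer} applies as you wrote. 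In short, reverse the order of your two paragraphs and use torsion-freeness as an input to the structure-theoretic step rather than as a separate conclusion.
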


However, it appears to be unknown whether a Hofer-faithful action of a Lie group $G$, even $G=\R,$ via $\Ham(M,\om)$ must in fact be smooth (see also \cite[Remark 1.14]{PSST}). If this were true, we would obtain as in Remark \ref{rmk: Rk}, that such groups must be extensions of $\R^k$ by discrete torsion-free groups. Nevertheless, we expect to show in future work that this conclusion is indeed true for a special class of symplectically aspherical symplectic manifolds in a different way.

\begin{rmk} Our approach to Theorems \ref{cor: SHS1} and \ref{thm: Hofer} is motivated by a question of Polterovich arising from \cite{P-nt} (see also \cite{GG-ai} for a discussion and \cite{CGG-ga, SW-St} for partial progress on this question). Polterovich's question consists in proving that for $\phi \in \mrm{Ham}(M,\om) \setminus \{\id\},$ for $(M,\om)$ closed symplectically aspherical, there exists $\eps_{\phi} > 0$ such that for every $k \in \N,$ \[ ||\phi^k|| \geq \eps_{\phi},\] where $||-||$ stands for the Hofer norm. It could also stand for the $C^0$-norm and for the $\gamma_{\bK}$-norm for $\bK$ a unital coefficient ring. 
\end{rmk}

\begin{rmk}
The continuity of the action is essential. Indeed, $\Z_p$ is a subgroup of $\Q_p$ which is abstractly isomorphic to $\R$ as an abelian group (and even as a $\Q$ vector space) by a choice of Hamel bases. Hence any effective $\R$-action yields an effective, and hence non-trivial, $\Z_p$-action. It is easy to see that effective $\R$-actions by Hamiltonian diffeomorphisms exist even locally in the manifold: with support in any Darboux neighborhood symplectomorphic to a ball in the standard symplectic $\R^{2n}.$ (See also Remark \ref{rmk: new} for a related construction.)
\end{rmk}

By a result of \cite{AS-flux}, for a class $\cl C$ of closed symplectic manifolds including the standard symplectic $2n$-torus $T^{2n},$ the Flux homomorphism $\mrm{Flux}: \Symp_0(M,\om) \to H^1(M;\R)/\Gamma,$ where $\Gamma$ is the symplectic Flux group (it is a discrete subgroup of $H^1(M;\R)$), is uniformly $C^0$-continuous and extends to $\ol{\Symp}_0(M,\om)$ with kernel $\ol{\Ham}(M,\om).$ More generally, the class $\cl C$ contains all closed symplectically aspherical manifolds $(M,\om)$ such that either the Euler characteristic $\chi(M)$ does not vanish or the evaluation map \[\mrm{ev}: \pi_1(\Symp(M,\om)) \to \mrm Z (\pi_1(M))\] to the center of the fundamental group of $M$ is surjective. 

\begin{thm}\label{cor: SHS2}
Let $M$ be a closed symplectically aspherical manifold in class $\cl C.$ Then if a locally compact group $G$ acts on $M$ faithfully via $\ol{\Symp}_0(M,\om),$ then $G$ is a Lie group. \end{thm}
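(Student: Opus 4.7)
The plan is to reduce Theorem \ref{cor: SHS2} to Theorem \ref{cor: SHS1} by means of the extended Flux homomorphism. By the structure theory of Gleason, Montgomery-Zippin, and Yamabe \cite{Gleason1,Gleason2,MZ,MZ-book,Yamabe1,Yamabe2}, together with Remark \ref{rmk: equiv}, in order to conclude that $G$ is a Lie group it suffices to verify that no continuous faithful action of $\Z_p$ by elements of $\ol{\Symp}_0(M,\om)$ exists for any prime $p$. Suppose for contradiction that $\rho\colon \Z_p \to \ol{\Symp}_0(M,\om)$ is such an action.

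Since $(M,\om)$ lies in class $\cl C$, by the result of \cite{AS-flux} cited above the Flux homomorphism extends to a $C^0$-continuous homomorphism $\mrm{Flux}\colon \ol{\Symp}_0(M,\om) \to H^1(M;\R)/\Gamma$ whose kernel is exactly $\ol{\Ham}(M,\om)$. Because $\Gamma$ is discrete, $H^1(M;\R)/\Gamma$ is a Lie group, and the composition $\mrm{Flux}\circ \rho\colon \Z_p \to H^1(M;\R)/\Gamma$ is a continuous group homomorphism. Its image is compact (since $\Z_p$ is) and totally disconnected (again since $\Z_p$ is); as any compact totally disconnected subgroup of a Lie group is finite (the no-small-subgroups property at the identity forces such a subgroup to be discrete, hence finite), the image is finite.

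It follows that $K := \ker(\mrm{Flux}\circ \rho)$ is an open subgroup of $\Z_p$. Every open subgroup of $\Z_p$ has the form $p^k\Z_p$ for some $k\geq 0$ and is itself topologically isomorphic to $\Z_p$. The restriction $\rho|_K$ is therefore a faithful continuous action of $K \cong \Z_p$ taking values in $\ker(\mrm{Flux}) = \ol{\Ham}(M,\om)$, which directly contradicts Theorem \ref{cor: SHS1} applied to $(M,\om)$.

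I do not expect any genuine obstacle: the proof is a routine topological-group reduction once Theorem \ref{cor: SHS1} and the $C^0$-extension of Flux in class $\cl C$ are available. The one mildly delicate point worth emphasizing is that open subgroups of $\Z_p$ are themselves isomorphic to $\Z_p$, so killing the Flux on a finite-index piece produces again a faithful $\Z_p$-action — now by Hamiltonian homeomorphisms — rather than merely trivialising a torsion component.
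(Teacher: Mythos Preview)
Your proof is correct and follows essentially the same route as the paper: both compose with the extended Flux homomorphism to reduce to Theorem~\ref{cor: SHS1}. The only cosmetic difference is the order of the dichotomy --- the paper first invokes Theorem~\ref{cor: SHS1} to conclude $\ker(\ol{\mrm{Flux}}\circ\rho)=\{1\}$ and then derives a contradiction from embedding $\Z_p$ into the Lie group $H^1(M;\R)/\Gamma$, whereas you first use the no-small-subgroups property to force the image to be finite and then apply Theorem~\ref{cor: SHS1} to the (open, hence $\cong\Z_p$) kernel.
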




\begin{proof}[Proof of Theorem \ref{cor: SHS2}]
Suppose that the action is given by the homomorphism $\rho: G \to \ol{\Symp}_0(M,\om).$ If $G$ is not a Lie group, then it contains an embedded copy $H$ of $\Z_p.$ Set $\rho'=\rho|_H.$ Then the homomorphism $\theta = \ol{\mrm{Flux}} \circ \rho': H \to H^1(M;\R)/\Gamma$ is continuous. Moreover $\ker \theta \subset H$ is a compact subgroup isomorphic to $(\rho')^{-1}(\ol{\Ham}(M,\om)).$ By Theorem \ref{cor: SHS1} $\ker \theta = \{1\},$ whence $\theta$ is an embedding. However, $H^1(M;\R)/\Gamma,$ being a Lie group, does not contain an embedded copy of $\Z_p.$ This finishes the proof.\end{proof}


\begin{rmk}\label{rmk: 2d}
Theorem \ref{cor: SHS2} gives a new proof of the two-dimensional case of the Hilbert-Smith conjecture for every closed surface $X$ of genus $\geq 1.$ Indeed, given a faithful $\Z_p$ action $\rho: \Z_p \to \mrm{Homeo}(X),$ we may first average a non-atomic fully supported Borel measure along the $\Z_p$-action (using the Haar measure on $\Z_p$) and obtain an invariant such measure $\mu.$ By the Oxtoby-Ulam theorem \cite[Theorem 2]{OxtobyUlam}, $\mu$ is the image by a homemorphism of a standard smooth measure on $X$ coming from an area form $\om.$ Hence we may assume that $\Z_p$ acts by area-preserving homeomorphisms $\mrm{Homeo}(X,\om).$ Now, as the identity component $\mrm{Homeo}_0(X,\om)$ is open in $\mrm{Homeo}(X,\om),$ restricting the action to the subgroup $p^k \Z_p$ for $k$ sufficiently large, we may further assume that $\rho$ takes values in $\mrm{Homeo}_0(X,\om).$ However, by \cite[Theorem 5.1]{OhMuller}, $\mrm{Homeo}_0(X,\om) = \ol{\Symp}_0(X,\om),$ whence Theorem \ref{cor: SHS2} applies. We note that we used a general reduction of the Hilbert-Smith conjecture to the measure-preserving case, which could be useful in further settings.
\end{rmk}

\begin{rmk}\label{rmk: new}
It is easy to see that our results about the Hilbert-Smith conjecture in dimension at least $4$ do not follow from prior results. Indeed, considering a small neighborhood $U$ in $M$ symplectomorphic to the stadard ball $B_{\eps} \subset \R^{2n}$ of radius $\eps,$ we recall the definition of the infinite twist $\phi_{\rho}$ associated to a function $\rho \in \sm{(0,\eps]},$ $\rho(r) = 0$ near $r=\eps$ and $\rho(r) \to \infty$ as $r \to 0.$ Let $\rho_{n}$ denote a smooth function constant near $r=0$ and coinciding with $\rho$ on $[\eps/n,\eps].$ Then $H_n(x) = \rho_n(|x|)$ is a smooth Hamiltonian with compact support in $B_{\eps}$ and $\phi^1_{H_n}$ converges in the $C^0$ metric to a Hamiltonian homeomorphism as $n \to \infty$. Embedding $B_{\eps}$ into a Darboux chart of a closed symplectic manifold $(M,\om)$, we obtain $\phi \in \ol{\Ham}(M,\om).$ Now for $\rho(r) \to \infty$ sufficiently fast as $r \to 0$, we can see that $\phi$ does not satisfy any H\"older or quasi-conformal regularity properties.   
\end{rmk}

The proof of Theorem \ref{cor: SHS1} yields a slightly more general statement. (See description of $\gamma_{\F_q}$ below.)

\begin{prop}\label{prop: conv to zero}
Let $(M,\om)$ be a closed symplectically aspherical manifold. If a Hamiltonian homeomorphism $\phi \in \ol{\Ham}(M,\om)$ admits $q$-th roots $\psi_q \in \ol{\Ham}(M,\om),$ $(\psi_q)^q = \phi,$ for $q$ in an infinite increasing sequence $\cl Q$ of primes, then $\gamma_{\F_q}(\psi_q) \to 0$ as $q \to \infty.$
\end{prop}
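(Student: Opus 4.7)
The strategy is to leverage a Smith-type inequality for the spectral pseudonorm $\gamma_{\F_q}$ under the $q$-th iterate, which is the key Floer-theoretic input developed in the paper (via Steenrod-style power operations on Hamiltonian Floer cohomology with $\F_q$ coefficients). Concretely, I intend to use an inequality of the form
\[ q \cdot \gamma_{\F_q}(\psi) \;\leq\; \gamma_{\F_q}(\psi^q) + C_M, \]
valid for every prime $q$ and every $\psi \in \ol{\Ham}(M,\om),$ with $C_M$ depending only on $M$; the $q$-independence of the additive constant ultimately reflects the uniform bound $\dim H^*(M;\F_q) \leq \dim H^*(M;\Q) + 2\,\rank\,\mrm{Tors}\,H^*(M;\Z).$

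Given such an inequality, the proposition follows by a short reduction. First, I bound $\gamma_{\F_q}(\phi)$ uniformly in $q$: since $\phi \in \ol{\Ham}(M,\om),$ the integer spectral pseudonorm $\gamma_{\Z}(\phi)$ is finite by the $C^0$-continuous extension of $\gamma_{\Z}$ from $\Ham(M,\om)$ used elsewhere in the paper, and reduction of coefficients $\Z \to \F_q$ can only split or shorten the bars of a barcode, so $\gamma_{\F_q}(\phi) \leq \gamma_{\Z}(\phi) =: C_\phi$ uniformly in $q \in \cl Q.$ Applying the Smith-type inequality to $\psi=\psi_q$ and using the hypothesis $(\psi_q)^q=\phi$ then gives
\[ \gamma_{\F_q}(\psi_q) \;\leq\; \frac{\gamma_{\F_q}(\phi) + C_M}{q} \;\leq\; \frac{C_\phi + C_M}{q} \;\longrightarrow\; 0 \]
as $q \to \infty$ through $\cl Q,$ as desired.

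The main obstacle is of course the Smith-type inequality itself, not this elementary reduction. Its proof for smooth non-degenerate $\psi$ is expected to proceed via the $\Z/q$-equivariant pair-of-pants product on $HF_*(\psi;\F_q)$ and an associated power operation relating the action filtration there to the equivariant action filtration on $HF_*(\psi^q;\F_q),$ thereby producing the factor $q$ on the left. Extending the inequality from smooth non-degenerate $\psi$ to arbitrary $\psi \in \ol{\Ham}(M,\om)$ requires a continuity/density argument in the $\gamma_{\F_q}$ (or $C^0$) topology, which is presumably another key technical point handled earlier in the paper.
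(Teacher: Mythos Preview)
Your overall shape is right: the argument does hinge on a Smith-type inequality for $\gamma_{\F_q}$ under $q$-th iteration, combined with the uniform bound $\gamma_{\F_q}(\phi)\leq\gamma_{\Z}(\phi)$ coming from coefficient reduction (this is exactly Lemma~\ref{lma: reduction}). However, the specific inequality you posit, $q\,\gamma_{\F_q}(\psi)\leq\gamma_{\F_q}(\psi^q)+C_M$ with $C_M$ depending only on $M$, is stronger than what the paper establishes, and your heuristic for the $q$-independence of $C_M$ does not hold up. The bound on $\dim H^*(M;\F_q)$ controls only the number of \emph{infinite} bars in the Floer barcode; the error term in the Smith inequality comes from the \emph{finite} bars, namely the boundary depths $\beta_i(\psi^q;\F_q)$ in degrees $2n<i\leq 2nq$, and there is no reason for their sum to be bounded independently of $q$ or of $\psi^q$.

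The paper's actual inequality (Theorem~\ref{thm: ineq}) is
\[
q\,\gamma_{\F_q}(\psi)-\gamma_{\F_q}(\psi^q)\;\leq\;2\sum_{2n<i\leq 2nq}\beta_i(\psi^q;\F_q),
\]
a sum of $2n(q-1)$ terms each bounded by $\gamma_{\F_q}(\psi^q)$. Taken naively this yields only $q\,\gamma_{\F_q}(\psi)\leq O(q)\cdot\gamma_{\F_q}(\psi^q)$, which is useless. To extract Proposition~\ref{prop: conv to zero} one must exploit that here $\psi^q=\phi$ is \emph{fixed}: the Lefschetz-type comparison (Lemma~\ref{lma: Lefschetz}) gives $\cl N_c(\phi;\F_q)\leq\cl N_{c/2}(\phi;\Q)$ for $q$ large, so for any threshold $c>0$ at most $\cl N_{c/2}(\phi;\Q)$ of the $\beta_i(\phi;\F_q)$ exceed $c$, while each of the remaining at most $2n(q-1)$ terms contributes at most $c$. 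Choosing $c$ proportional to $\gamma_{\F_q}(\psi_q)$ and rearranging (or arguing by contradiction) then forces $\gamma_{\F_q}(\psi_q)\to 0$. The resulting bound (Remark~\ref{rmk: estimate}) is implicit, with the numerator depending on $\cl N_{\gamma_{\F_q}(\psi_q)/8n}(\phi;\Q)$, rather than the clean $O(1/q)$ your version would give.
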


Theorem \ref{cor: SHS1} follows by observing that if $\phi$ topologically generates a non-trivial $\Z_p$-action, then the conclusion of Proposition \ref{prop: conv to zero} is violated. Indeed, a general idea of this approach is that {\em small subgroups in $\Z_p$ imply large roots of homeomorphisms}. Note that the only known examples of $\phi \in {\Ham}(M,\om)$ satisfying the condition of Proposition \ref{prop: conv to zero} are autonomous Hamiltonian diffeomorphisms $\phi = \phi^1_H$ for $H \in \sm{M,\R}$ time independent. In this case $\psi_q = \phi^1_{\frac{1}{q} H}$ satisfies the conclusion of the proposition since for all $q$ sufficiently large $\gamma_{\F_q}(\frac{1}{q} H) = \frac{1}{q} (\max H - \min H),$ which clearly converges to zero. See Remark \ref{rmk: estimate} for an estimate of the convergence rate in Proposition \ref{prop: conv to zero}.

\begin{rmk}\label{rmk: manifolds}
The proofs of Theorem \ref{cor: SHS1} and Proposition \ref{prop: conv to zero} extend by rather straightforward, yet somewhat lengthy, technical modifications to the case where $(M,\om)$ is symplectically Calabi-Yau, that is, when $\langle c_1, A \rangle = 0$ for all $A \in \pi_2(M),$ to the case where $[\om]=0$ on $\pi_2(M),$ and to the case where $(M,\om)$ is negatively monotone, that is $[\om] = - \kappa c_1,$ $\kappa > 0,$ on $\pi_2(M)$ and, for technical reasons, semi-positive.  For ease of exposition, in this paper we limit ourselves to the symplectically aspherical case. Indeed, arguably the most interesting example, the standard symplectic $T^{2n},$ is already in this class of manifolds. However, we will present the arguments in these cases in a subsequent publication. 
\end{rmk}

\begin{rmk}\label{rmk: mcg}
By additional arguments slightly extending those of \cite{AS-flux} and \cite{SW-St} it is possible to extend Theorem \ref{cor: SHS2} to actions by homeomorphisms in $\ol{\Symp}(M,\om),$ the $C^0$-closure in $\mrm{Homeo}(M)$ of the full group $\Symp(M,\om)$ of symplectomorphisms. Since this would require a few technical detours from the subject of this paper, we will present these arguments in a subsequent publication.
\end{rmk}

Finally, we remark on the techniques used in the proof. Prior proofs of results pertaining to the Hilbert-Smith conjecture rely either on the direct geometric structure of orbits of the $\Z_p$ action or on Yang's theorem \cite{Yang} regarding the homological dimension of the orbit space. In contrast, our method uses the theory of barcodes of Hamiltonian maps. It was first used in symplectic topology in \cite{PS} and has since burgeoned to an active subfield of research. We will give only a very partial list of references. Of particular relevance to this paper are the references \cite{SZhao, S-HZ, SW-St, AS-nt}, which combine the theory of barcodes with equivariant power operations, with applications to Hamiltonian dynamics. These power operations can be considered to be Floer-theoretical manifestations of Smith theory \cite{Smith, Smith-2,Smith-3}. We also rely on \cite{BHS} proving that on closed symplectically aspherical manifolds, barcodes extend to Hamiltonian homeomorphisms. 

An important role is played by Hamiltonian spectral invariants, and spectral norm in particular. We briefly recall a few useful properties of these spectral invariants referring to the above references, as well as to the original definitions \cite{Viterbo, Schwarz, Oh} for more details and the proofs thereof, and to \cite{BHS} for the $C^0$ continuity property. Let $(M,\om)$ be a closed symplectically aspherical manifold. First of all, for a unital ring $\bK,$ a non-degenerate Hamiltonian diffeomorphism $\phi,$ namely whose graph $\gr(\phi) \subset M \times M$ intersects the diagonal transversely, and a choice of Hamiltonian $H \in \sm{S^1 \times M}$ generating it, as well as a generic $\om$-compatible almost complex structure $J$, Floer theory associates a graded complex $(CF^*(H;\bK),d)$ on the space $\oplus_{x \in \mrm{Fix}_c(\phi)} \bK(x)$ filtered by the action functional \[\cl A_H(x) = \int_0^1H(t,x(t)) dt -  \int_{\ol{x}} \om,\] where $x(t) = \phi^t_H(x),$ $\mrm{Fix}_c(\phi)$ are those fixed points of $\phi$ such that $x(t)$ is contractible, and $\ol{x}: \bb D \to M,$ $\bb D = \{ |z| \leq 1 \} \subset \C$ is a smooth map contracting $x(t).$ The filtration is defined as \[\cl A_H( \sum a_i x_i) = \min \{ \cl A_H(x_i)\;|\; a_i \neq 0\},\] and $\cl A_H(0) = +\infty.$ Then $\cl A_H(dx) > \cl A_H(x)$ and $CF^*(H;\bK)^{>a} = \cl A_H^{-1}( (a,\infty) )$ is a subcomplex with respect to $d.$ We write $HF^*(H;\bK)$ for the homology of $(CF^*(H;\bK),d)$ and $HF^*(H;\bK)^{>a}$ for the homology of $CF^*(H;\bK)^{>a}.$ By \cite{PSS} there is an isomorphism \[ PSS_H: H^*(M;\bK) \to HF^*(H;\bK) \] and there are maps $\pi^{a,\infty}: HF^*(H;\bK)^{>a} \to HF^*(H;\bK)$ for all $a \in \R.$ Now for a class $\al \in H^*(M;\bK) \setminus \{0\}$ set $\al_H = PSS_H(\al).$ Then \[ c_{\bK}(\al, H) = \sup \{a \in \R\;|\; \al_H \in \mrm{Im}(\pi^{a,\infty})\}.\] These invariants satisfy the following properties: 

\begin{enumerate}
\item (Hofer-continuity) For $F,G \in \sm{S^1 \times M, \R},$ \[-\int_0^1 \max(F_t-G_t)\,dt \leq c_{\bK}(\al, F) - c_{\bK}(\al, G) \leq \int_0^1 \min(F_t-G_t)\,dt.\]
\item (Super-additivity) For $F,G \in \sm{S^1 \times M, \R},$ $\al, \beta \in H^*(M;\bK),$ \[ c_{\bK}(\al \cup \beta, F \# G) \geq  c_{\bK}(\al, F) + c_{\bK}(\beta, G),\] where $F\#G(t,x) = F(t,x)+G(t,(\phi^t_F)^{-1}(x))$ is a Hamiltonian generating the flow $(\phi^t_F \phi^t_G)_{t \in [0,1]}.$
\item (Independence of the Hamiltonian) For $H \in \sm{S^1 \times M, \R}$ and the inverse Hamiltonian $\ol{H}(t,x) = - H(1-t, x),$ \[ \gamma_{\bK}(\phi) = - c_{\bK}(1, H) - c_{\bK}(1, \ol{H}) \geq 0\] depends only on the time-one map $\phi = \phi^1_H.$ 
\item (Non-degeneracy) For all $\phi \in \Ham(M,\om),$ $\phi \neq \id,$ $\gamma_{\bK}(\phi) > 0.$   
\item (Poincar\'e duality) Let $\bK$ be a field. Then for all $H \in \sm{S^1 \times M, \R},$ \[c_{\bK}(1, H) = - c_{\bK}(\mu, \ol{H}),\] where $\mu \in H^{2n}(M;\Z)$ is the positive generator and we keep the same notation for the induced class with all coefficient rings.
\item ($C^0$ continuity) $\gamma_{\bK}$ is uniformly $C^0$ continuous and extends to $\ol{\Ham}(M,\om).$ Moreover it remains non-degenerate on $\ol{\Ham}(M,\om).$
\end{enumerate}

Note that the Hofer-continuity and the Independence of the Hamiltonian properties imply that for all $\phi \in \Ham(M,\om),$ \begin{equation}\label{eq: ga hof}\gamma_{\bK}(\phi) \leq ||\phi||, \end{equation} where \[||\phi|| = \inf_{\phi^1_H = \phi}  \int_0^1 \left(\max(H_t)-\min(H_t)\right)\,dt \] is the Hofer norm of $\phi.$

\section*{Acknowledgments}
I thank Leonid Polterovich for numerous inspiring discussions related to the subject of this paper as well as for kindly offering to attach Appendix \ref{app: Leonid} to it. I thank him, Marcelo S. Atallah, and Nicholas Wilkins for fruitful collaborations which have greatly influenced this work. I also thank Sebastian Hurtado and Jarek K{\c{e}}dra for initially bringing the Hilbert-Smith conjecture to my attention in 2017. I was supported by an NSERC Discovery Grant, the Fondation Courtois, and a Sloan Research Fellowship.

\section{Proof of Theorem \ref{cor: SHS1}}

Let $\bK$ be a field. Let us recall a few properties of barcodes of Hamiltonian homeomorphisms from \cite{BHS} in cohomological form. For a Hamiltonian homeomorphism $\phi \in \Ham(M,\om)$ there exists a barcode $\cl B(\phi; \bK)_i = \{(I_{i,j}, m_{i,j})_{j \in \N}\},$ in degree $i \in \Z$ with coefficients in $\bK,$ defined up to shift, where $I_{i,j}$ are itervals of the form $[a,b)$ or $(-\infty, a)$ and $m_{i,j} \in \N$ are positive integers. There are exactly $\dim_{\bK} H^i(M;\bK)$ infinite bars of the form $(-\infty,a)$ counting with multiplicity. All other bars are finite of the form $[a,b).$ There is also a total barcode $\cl B(\phi; \bK)$ of $\phi$ also well-defined up to shift, so that in a natural way $\cl B(\phi; \bK) = \sqcup_{i \in \Z} \cl B(\phi; \bK)_i,$ the sum (in the sense of multi-sets) of the barcodes over the degrees (more precisely, there exist barcode representatives, not up to shift, of all of these barcodes, which satisfy this identity). Crucially for this paper, $\cl B(\phi; \bK)$ satisfies the following finiteness condition. Set \[\cl N_{\eps}(\phi;\bK) = \# \{ \text{bars in}\; \cl B(\phi; \bK)\; \text{of length} > \eps \}.\] Then \[ \cl N_{\eps}(\phi;\bK) < \infty\] for all $\eps>0.$ This finiteness condition also follows from inequality \eqref{eq: approx} below.

Let $\beta_i(\phi)$ denote the {\em boundary depth} of $\phi$ in degree $i.$ Namely it is the maximal length of a bar in $\cl B(\phi;\bK)_i.$ By the finiteness property it exists, is finite, and in view of \cite{KS} satisfies $\beta_i(\phi) \leq \gamma(\phi;\bK)$ for all $i.$ In particular \[\cl N'_{\eps}(\phi;\bK) = \# \{ i \in \Z\;|\; \beta_i(\phi;\bK) > \eps\}\] satisfies \[ \cl N'_{\eps}(\phi;\bK) \leq \cl N_{\eps}(\phi;\bK) < \infty.\]

\begin{lma}\label{lma: Lefschetz}
Let $\phi \in \ol{\Ham}(M,\om),$ $\eps > 0$ and $\cl Q$ an infinite set of primes. Then there exist $q_0(\phi), q_1(\phi,\eps) \in \N,$ such that for all $q \in \cl Q,$ $q \geq q_0(\phi),$ $\gamma_{\F_q}(\phi) \geq \gamma_{\Q}(\phi)/2$ and for all $q \in \cl Q,$ $q \geq q_1(\phi,\eps),$ $\cl N_{\eps}(\phi;\F_q) \leq \cl N_{\eps/2}(\phi; \Q).$
\end{lma}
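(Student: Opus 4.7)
The plan is a two-step argument: first, for a smooth non-degenerate Hamiltonian diffeomorphism $\phi'$, compare the $\F_q$- and $\Q$-barcodes via an integral universal-coefficient argument; second, pass to $\phi \in \ol{\Ham}(M,\om)$ by $C^0$-approximation with uniform (in $\bK$) barcode stability from \cite{BHS}.

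For the first step, take $\phi' = \phi^1_H$ smooth and non-degenerate. The Floer complex $CF^*(H;\Z)$, built with coherent orientations, is a finitely generated filtered chain complex of free $\Z$-modules. By Smith normal form applied filtration-wise (the $\Z$-coefficient analogue of the Usher-Zhang structure theorem), $CF^*(H;\Z)$ decomposes as an orthogonal direct sum of single-generator summands (contributing infinite bars matched to a basis of $H^*(M;\Z)$ via PSS) together with 2-term blocks $\Z \xrightarrow{n_k} \Z$ with integers $n_k \neq 0$ (contributing finite bars of length $b_k - a_k$). Tensoring with a field $\bK$, each finite block retains its bar if $n_k$ is invertible in $\bK$, and splits into two infinite bars otherwise. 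Setting $N(\phi') = \prod_k |n_k| \cdot |H^*(M;\Z)_{\mrm{tors}}|$, for every prime $q > N(\phi')$ we obtain $\cl B(\phi'; \F_q) = \cl B(\phi'; \Q)$ as multisets of bars (up to the canonical shift). In particular $\gamma_{\F_q}(\phi') = \gamma_{\Q}(\phi')$ and $\cl N_\delta(\phi'; \F_q) = \cl N_\delta(\phi'; \Q)$ for every $\delta > 0$.

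For the second step, choose $\delta < \min(\gamma_{\Q}(\phi)/8, \eps/8)$. Using the $C^0$-continuity of barcodes from \cite{BHS}, pick a smooth non-degenerate $\phi'$ with $d_{\mrm{bot}}(\cl B(\phi; \bK), \cl B(\phi'; \bK)) < \delta$ simultaneously for every field $\bK$. Then, using that $\gamma_{\bK}$ is the difference of two specific infinite-bar endpoints (by Poincar\'e duality over a field) and hence $2$-Lipschitz in bottleneck distance, for $q \in \cl Q$ with $q > N(\phi')$ we chain
\[ \gamma_{\F_q}(\phi) \geq \gamma_{\F_q}(\phi') - 2\delta = \gamma_{\Q}(\phi') - 2\delta \geq \gamma_{\Q}(\phi) - 4\delta > \tfrac{1}{2}\gamma_{\Q}(\phi), \]
and similarly a three-step bottleneck matching sends any bar of length $>\eps$ in $\cl B(\phi; \F_q)$ to a bar of length $>\eps - 2\delta$ in $\cl B(\phi'; \F_q) = \cl B(\phi'; \Q)$ and thence to a bar of length $>\eps - 4\delta > \eps/2$ in $\cl B(\phi; \Q)$, yielding $\cl N_\eps(\phi; \F_q) \leq \cl N_{\eps/2}(\phi; \Q)$. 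Setting $q_0(\phi) = q_1(\phi, \eps) = N(\phi')$ concludes both inequalities.

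The main obstacle is verifying that the barcode-stability modulus in \cite{BHS} is uniform in the coefficient field. This should follow from the coefficient-ring-indifference of the underlying Floer-continuation energy estimates; if needed, one can approximate $\phi$ by a Hofer-Cauchy sequence of smooth Hamiltonian diffeomorphisms (as in the definition of $\ol{\Ham}$) and use the manifestly ring-independent Hofer-continuity of spectral invariants in place of $C^0$-continuity.
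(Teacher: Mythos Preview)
Your proposal is correct and follows essentially the same two-step approach as the paper: (i) for a non-degenerate smooth approximant $\phi'$, the $\F_q$- and $\Q$-barcodes agree for all sufficiently large primes $q$; (ii) pass to $\phi$ by $C^0$-approximation with field-uniform barcode stability and chain the inequalities. The paper cites \cite[Lemma 16]{S-HZ} for step (i), whereas you sketch it via a filtered Smith normal form over $\Z$; the constants differ slightly ($\delta=\gamma_{\Q}(\phi)/4$ and $\delta=\eps/4$ in the paper, versus your $\delta<\min(\gamma_{\Q}(\phi)/8,\eps/8)$), but the logic is identical. One small point: by combining both parts with a single $\delta$ depending on $\eps$, your $q_0$ inherits an $\eps$-dependence, while the lemma asks for $q_0=q_0(\phi)$; this is trivially fixed by running the two halves with separate approximants, as the paper does.
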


\begin{proof}[Proof of Lemma \ref{lma: Lefschetz}]
Fix $\delta>0.$ We can assume that $\phi \neq \id$ and $\gamma_{\Q}(\phi)>0.$ By the $C^0$-continuity of the spectral norm, there exists a non-degenerate Hamiltonian {\em diffeomorphism} $\psi = \psi_{\delta}$, such that $|\gamma_{\bK}(\psi)-\gamma_{\bK}(\phi)| < \delta$ for every field $\bK.$ However, by the proof of \cite[Lemma 16]{S-HZ}, $\gamma_{\F_q}(\psi) = \gamma_{\Q}(\psi)$ for all $q \geq q'(\psi).$ Therefore for such $q,$ \[ \gamma_{\F_q}(\phi) > \gamma_{\F_q}(\psi) - \delta = \gamma_{\Q}(\psi) - \delta > \gamma_{\Q}(\phi) - 2\delta.\] Setting $\delta = \gamma_{\Q}(\phi)/4 > 0$ and $q_0(\phi) = q'(\psi_{\delta})$ then finishes the first part of the proof. To provide further details, the proof of \cite[Lemma 16]{S-HZ} shows that for all $q \geq q'(\psi),$ we have an identity of barcodes $\cl B(H;\F_q) = \cl B(H; \Q)$ for any Hamiltonian $H$ generating $\psi.$ In particular the maximal end-points of infinite bars agree, so $c_{\F_q}(\mu,H) =c_{\Q}(\mu,H).$  Moreover, the same is true for the minimal end-points of infinite bars, so $c_{\F_q}(1,H) =c_{\Q}(1,H).$ Hence, by the Poincar\'e duality property, \[c_{\F_q}(1,\ol{H}) = -c_{\F_q}(\mu,H) =-c_{\Q}(\mu,H) = c_{\Q}(1,\ol{H}).\] Therefore $\gamma_{\F_q}(\psi) = -c_{\F_q}(1,H) - c_{\F_q}(1,\ol{H})$ and $\gamma_{\Q}(\psi) = -c_{\Q}(1,H) - c_{\Q}(1,\ol{H})$ agree for all $q \geq q'(\psi).$

For the second part of the proof, fix $\eps/2 > \delta > 0.$ By the definition of the barcode of $\phi,$ there exists a non-degenerate Hamiltonian diffeomorphism $\psi=\psi_{\delta},$ such that \begin{equation}\label{eq: approx} \cl N_{\eps}(\phi;\bK) \leq \cl N_{\eps-\delta}(\psi;\bK) \leq \cl N_{\eps-2\delta}(\phi;\bK).\end{equation} (See for example \cite{BHS}, \cite[Section 2]{CC}.) Now, by \cite[Lemma 16]{S-HZ}, \[\cl N_{\eps-\delta}(\psi;\F_q) =  \cl N_{\eps-\delta}(\psi;\Q)\] for all $q \geq q''(\psi).$ Therefore for such $q,$ \[\cl N_{\eps}(\phi;\F_q) \leq \cl N_{\eps-\delta}(\psi;\F_q) =  \cl N_{\eps-\delta}(\psi;\Q) \leq \cl N_{\eps-2\delta}(\psi;\Q).\] Therefore setting $\delta = \eps/4$ and $q_1(\phi,\eps) = q''(\psi_{\delta})$ finishes the proof.



\end{proof}

\begin{lma}\label{lma: reduction}
Let $\phi \in \ol{\Ham}(M,\om).$ Then for every prime $q,$ $\gamma_{\F_q}(\phi) \leq \gamma_{\Z}(\phi).$
\end{lma}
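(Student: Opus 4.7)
\begin{pfs}
The plan is to relate the two spectral norms through the ring-reduction homomorphism $\Z \twoheadrightarrow \F_q$, using the functoriality of Floer cohomology in the coefficients. The basic intuition is that the $\Z$-coefficient theory has strictly more information than the $\F_q$-coefficient theory, so any action cycle representing the unit with $\Z$ coefficients descends to one with $\F_q$ coefficients at the same action level.

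First I would reduce to the smooth case: by the $C^0$-continuity property of $\gamma_\bK$ (property (6) above), both $\gamma_{\F_q}$ and $\gamma_\Z$ extend continuously from $\Ham(M,\om)$ to $\ol{\Ham}(M,\om)$, and $\Ham(M,\om)$ is dense in the $C^0$ closure, so it suffices to verify the inequality for a non-degenerate $\phi \in \Ham(M,\om)$. Fix a Hamiltonian $H \in \sm{S^1\times M,\R}$ with $\phi^1_H = \phi$. The Floer chain complex $CF^*(H;\Z)$ is (in the symplectically aspherical setting) a free $\Z$-module on the contractible fixed points of $\phi$, graded by the Conley--Zehnder index and filtered by the action functional $\cl A_H$; crucially, $CF^*(H;\F_q) \cong CF^*(H;\Z)\otimes_\Z \F_q$, with the same generators and the same action levels. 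Reduction mod $q$ thus yields a filtration-preserving chain map
\[ r\colon CF^*(H;\Z) \longrightarrow CF^*(H;\F_q),\]
sending $CF^*(H;\Z)^{>a}$ into $CF^*(H;\F_q)^{>a}$ for every $a\in\R$. Since PSS is built from a moduli problem that is independent of the coefficient ring, the induced map $r_*\colon HF^*(H;\Z) \to HF^*(H;\F_q)$ intertwines $PSS_H^\Z$ and $PSS_H^{\F_q}$, and in particular sends $1_H^\Z = PSS_H^\Z(1)$ to $1_H^{\F_q}=PSS_H^{\F_q}(1)$.

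Given this, the spectral invariant estimate is immediate from the definition: if $c\in CF^*(H;\Z)^{>a}$ is a cycle whose class in $HF^*(H;\Z)$ equals $1_H^\Z$, then $r(c)\in CF^*(H;\F_q)^{>a}$ is a cycle representing $1_H^{\F_q}$. Hence
\[ c_{\F_q}(1, H) \;\geq\; c_\Z(1, H),\]
and applying the same argument to $\ol H$ gives $c_{\F_q}(1, \ol H) \geq c_\Z(1, \ol H)$. Adding and using property (3) yields
\[ \gamma_{\F_q}(\phi) \;=\; -c_{\F_q}(1, H) - c_{\F_q}(1, \ol H) \;\leq\; -c_\Z(1, H) - c_\Z(1, \ol H) \;=\; \gamma_\Z(\phi),\]
as desired, and extending to $\ol{\Ham}(M,\om)$ by $C^0$-density finishes the proof.

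The only technical point that requires care, rather than a routine unpacking of definitions, is the construction of $CF^*(H;\Z)$ with a coherent system of orientations and the verification that the tensoring map $r$ commutes with the Floer differential and with PSS on the nose. In the symplectically aspherical setting this is entirely standard (no bubbling, no Novikov complications), so I expect no real obstacle; the heart of the argument is simply that the filtration is preserved under the coefficient change $\Z\to \F_q$.
\end{pfs}
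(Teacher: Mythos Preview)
Your proof is correct and follows essentially the same route as the paper's own argument: reduce to the smooth (non-degenerate) case by $C^0$-continuity, then use the filtration-preserving coefficient-reduction chain map $r_q\colon CF^*(H;\Z)\to CF^*(H;\F_q)$ and its compatibility with PSS to deduce $c_{\F_q}(1,H)\geq c_{\Z}(1,H)$. The paper first cites this inequality from \cite{KS-Z} and then spells out exactly the chain-level argument you give.
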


\begin{proof}
By \cite{KS-Z}, we have $\gamma_{\F_q}(\psi) \leq \gamma_{\Z}(\psi)$ for all primes $q$ and Hamiltonian diffeomorphisms $\psi.$ Consider $\psi_j \in \Ham(M,\om)$ such that $d_{C^0}(\psi_j, \phi) \to 0$ as $j \to \infty.$ Then for every ring $\bK,$ $\gamma_{\bK}(\phi) = \displaystyle \lim_{j \to \infty} \gamma_{\bK}(\psi_j).$ Hence we obtain the desired inequality by passing to the limit as $j \to \infty.$ 

To provide further detail, let us prove the inequality for $\psi \in \Ham(M,\om).$ By the Hofer-continuity of spectral invariants, it is enough to prove it for non-degenerate $\psi.$ Let $H$ be a Hamiltonian generating $\psi.$ It is enough to prove that $c_{\F_q}(1,H) \geq c_{\Z}(1,H).$ Let $x = \sum a_i x_i,$ where $x_i$ are the (canonically) capped periodic orbits, be a cycle homologous to $y = PSS_{\Z, H}(1) = \sum b_i x_i$ in the Floer complex $CF^*(H;\Z),$ such that $\cl A_H(x) = c_{\Z}(1, H).$ Let $z \in CF^*(H;\Z),$ $z = \sum c_i x_i$ be such that $dz = x-y.$ Let $r_q: CF^*(H;\Z) \to CF^*(H;\F_q)$ be the reduction map modulo $q.$ Then $r_q(x) = \sum r_q(a_i) x_i$ is homologous to $r_q(y) = \sum r_q(b_i) x_i = PSS_{\F_q, H}(1)$ in $CF^*(H;\F_q)$ since $d r_q(z) = r_q(dz) = r_q(x) - r_q(y).$ Moreover $\cl A_H(r_q(x)) \geq \cl A_H(x).$ Therefore $c_{\F_q}(1,H) \geq \cl A_H(r_q(x)) \geq c_{\Z}(1,H)$ as desired.
\end{proof}

Analyzing the proof of the main inequality in \cite{SW-St}, we obtain the following sharpening thereof. Indeed, it is easy to see that the bound on the drop in filtration at the $i$-th step, $i = 1, \ldots, 2n(q-1)$ in the argument proving \cite[Theorem 1.15]{SW-St} is bounded by $\beta_{2nq+1 - i}(\phi^q;\F_q).$ The rest of the proof proceeds similarly.

\begin{thm}\label{thm: ineq}
Let $\psi \in \ol{\Ham}(M,\om)$ and let $q$ be a prime. Then \[ q \cdot \gamma(\psi; \F_q) - \gamma(\psi^q; \F_q) \leq 2 \sum_{2n < i \leq 2nq}  \beta_i(\psi^q; \F_q).\] 
\end{thm}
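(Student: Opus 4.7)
The plan is to revisit the main inequality of \cite{SW-St} and sharpen its proof by tracking the degrees of the bars that actually enter the argument. First, by the $C^0$-continuity of the spectral norm and of the barcode (property (6) of spectral invariants together with the approximation \eqref{eq: approx} applied to $\psi$ and to $\psi^q$), both sides of the inequality are continuous under $C^0$-approximation of $\psi$ by non-degenerate Hamiltonian diffeomorphisms. Hence I may assume $\psi = \phi_H^1$ is non-degenerate with a chosen Hamiltonian $H$, whose $q$-th iterate $H^{(q)}(t,x) = q\, H(qt,x)$ generates $\psi^q$ (and for which $\psi^q$ is also non-degenerate after a further small perturbation compatible with the $\Z/q$-action).

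Next I would invoke the $\Z/q$-equivariant Floer-theoretic power operation used in \cite{SW-St, SZhao, AS-nt}, which at the chain level produces a filtered $\F_q$-linear map relating $CF^*(H;\F_q)$ and the Tate localization of the $\Z/q$-equivariant complex $CF^*_{\Z/q}(H^{(q)};\F_q)$. The key filtered property is that applying this operation to a cycle of action $a$ in $CF^*(H;\F_q)$ yields a cycle whose action is $q a$ (plus an additive term depending on the chosen power of the equivariant parameter $u$). Applying this to a minimal-action PSS representative of $1 \in H^0(M;\F_q)$ and then converting the resulting class in the Tate complex back into an honest cycle in $CF^*(H^{(q)};\F_q)$ gives the estimate
\[ q \cdot c_{\F_q}(1,H) \;\leq\; c_{\F_q}(1,H^{(q)}) \;+\; \sum_{2n < i \leq 2nq} \beta_i(\psi^q;\F_q).\]

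The degree restriction is the heart of the matter. The class $1$ sits in degree $0$, and its image after applying the power operation and truncating the $u$-variable sits in the degrees of $CF^*(H^{(q)};\F_q)$ that arise from the Conley-Zehnder shift for genuinely iterated orbits, which after a standard index computation lie strictly above $2n$ and at most $2nq$. Each correction needed to descend from the Tate localization to the ordinary Floer complex is paid for by crossing a finite bar in one of these degrees, giving the restricted sum. To pass from the $c_{\F_q}(1,-)$ inequality to the $\gamma$ inequality, I would apply the same estimate to the inverse Hamiltonian $\ol H$ (which generates $\psi^{-1}$ and whose $q$-th iterate generates $\psi^{-q}$), and combine with the Poincar\'e duality property (5) together with the symmetry $\beta_i(\psi^{-q};\F_q) = \beta_{2n-i+2nq}(\psi^q;\F_q)$ of boundary depths under inversion (which preserves the range $(2n,2nq]$). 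Summing the two inequalities gives the factor of $2$ and the claimed bound.

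The main obstacle is the bookkeeping in the second paragraph: verifying that, under the chosen grading conventions for $\psi^q$ and for the $\Z/q$-equivariant complex, the degrees of bars that actually get used to trade powers of $u$ for honest chain-level primitives fall in the interval $(2n, 2nq]$ and nowhere else. This amounts to a careful index computation for iterated orbits, together with an analysis of the $u$-torsion of the equivariant complex, and is the step where the improvement over the original estimate of \cite{SW-St} is gained.
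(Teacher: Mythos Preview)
Your proposal follows essentially the same route as the paper: both reduce to revisiting the proof of \cite[Theorem~1.15]{SW-St} and observing that the filtration drops occurring there are controlled by boundary depths in specific degrees. The paper's description is terser than yours --- it simply records that the drop at the $i$-th step of that argument, for $i=1,\ldots,2n(q-1)$, is bounded by $\beta_{2nq+1-i}(\psi^q;\F_q)$, which sweeps exactly the interval $(2n,2nq]$ --- and then says the rest proceeds as in \cite{SW-St}. Your more explicit two-pass organization (one inequality for $H$, one for $\ol{H}$, combined via Poincar\'e duality) is a reasonable unpacking of that sentence.

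One small caveat: the symmetry you write, $\beta_i(\psi^{-q};\F_q)=\beta_{2n-i+2nq}(\psi^q;\F_q)$, does not literally preserve the half-open interval $(2n,2nq]$; it sends it to $[2n,2nq)$. The correct Poincar\'e-duality index shift (which in cohomological conventions is $i\mapsto 2n+1-i$ rather than $i\mapsto 2n-i$, before accounting for the degree shift coming from iteration) fixes this. As you yourself note, this is exactly the bookkeeping that needs to be done carefully, and the paper sidesteps it by pointing directly to the step-by-step structure of the \cite{SW-St} argument rather than by invoking a duality relation between the two halves.
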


\begin{proof}[Proof of Theorem \ref{cor: SHS1}]
Let $\rho: \Z_p \to \ol{\Ham}(M,\om)$ be an effective continuous action with respect to the $C^0$ topology on the target. In particular it is continuous with respect to the metric $\gamma_{\Z}$ on the target. Let $\phi = \rho(1) \in \ol{\Ham}(M,\om).$ Set $\eps_{-} = \gamma_{\Q}(\phi)/4.$ Let $a_k:\Z_p \to \Z/p^k$ denote the standard projection coming from the isomorphism $\Z_p/p^k\Z_p \cong \Z/p^k.$ By the continuity of the action there exists $k \in \bb N$ such that for all $x \in p^k \Z_p,$ $\gamma_{\Z}(\rho(x)) < \eps_-.$ Consider the set $\cl Q$ of primes $q \equiv 1\mod p^k.$ It is an infinite set due to Dirichlet's theorem on primes in arithmetic progressions.

First note that as $\Z_p$ is compact and the action is continuous, $\eps_{+} = \max_{x \in \Z_p} \gamma_{\Z}(\rho(x))$ is attained and is finite. Moreover $\eps_{+,q} = \max_{x \in \Z_p} \gamma_{\F_q}(\rho(x))$ for a prime $q$ is again attained and satisfies $\eps_{+,q} \leq \eps_+$ by Lemma \ref{lma: reduction}.  Then, similarly, for $q \in \cl Q,$ $\eps_{-,q} = \min_{a_k(x) = 1} \gamma_{\F_q}(\rho(x))$ is attained and is strictly positive. Moreover, by the triangle inequality and Lemma \ref{lma: Lefschetz} $\eps_{-,q} \geq \gamma_{\F_q}(\phi) - \eps_{-} \geq \eps_{-}$ for all $q\geq q_0(\phi),$ a lower bound independent of $q.$ So in summary, for $q \geq q_0(\phi),$ \[ 0 <  \eps_- \leq \eps_{-,q} \leq \eps_{+,q} \leq \eps_{+} < \infty.\] Now observe that for $q \in \cl Q,$ its tautological image $q \in \Z \subset \Z_p$ satisfies $a_k(q) =1$ and is invertible in $\Z_p$. Let $q^{-1} \in \Z_p$ be its inverse. Then $a_k(q^{-1}) = 1$ and $\psi_q = \rho(q^{-1})$ satisfies $(\psi_q)^{q} = \rho(q^{-1} q) = \rho(1) = \phi,$ so $\psi_q$ is a $q$-th root of $\phi.$ Let us apply Theorem \ref{thm: ineq} for $\psi = \psi_q$ and $\psi^q = \phi.$ We get \[ q \cdot \gamma(\psi_q; \F_q) - \gamma(\phi; \F_q) \leq 2 \sum_{2n < i \leq 2nq}  \beta_i(\phi; \F_q).\] Now for $q \geq q_0(\phi),$ \[\gamma(\psi_q; \F_q) \geq \eps_-,\] and for all $q,$ \[\beta_i(\phi;\F_q) \leq \gamma(\phi;\F_q) \leq \eps_+.\] Now for $c>0$ to be chosen later, let $I_{c,q} = \{ 2n < i \leq 2nq\;|\; \beta_i(\phi;\F_q) \leq c \}$ and $n_{c,q} = \# I_{c,q}.$ Note that $n_{c,q} \leq 2n(q-1).$ Then for all $q \geq \max\{q_0(\phi), q_1(\phi,c)\},$ we have by Lemma \ref{lma: Lefschetz} that \[\cl N_{c}(\phi;\F_q) \leq \cl N_{c/2}(\phi;\Q).\] Therefore \[ q \eps_- - \eps_+ \leq 2 c n_{c,q} + 2\cl N_{c}(\phi;\F_p) \eps_+ \leq 4n(q-1)c + 2\cl N_{c/2}(\phi;\Q) \eps_+,\] as there are at most $\cl N_c(\phi;\F_q)$ indices $i$ (in particular among $2n<i \leq 2nq$) with $\beta_i(\phi;\F_q)>c.$ Rearranging the terms, we get \[ q (\eps_- - 4nc) \leq (2 \cl N_{c/2}(\phi;\Q)+1) \eps_+.\] Fixing a number $c < \frac{\eps_-}{4n},$ for instance $c=\frac{\eps_-}{8n},$ and considering $q \in \cl Q,$ $q \geq \max\{q_0(\phi), q_1(\phi,c)\},$ we get that the left hand side grows linearly in $q,$  while the right hand side is bounded in $q.$ This is a contradiction, which completes the proof.
\end{proof}

\begin{rmk}\label{rmk: estimate}
The corresponding estimate for Proposition \ref{prop: conv to zero} is \[\gamma_{\F_q}(\psi_q) \leq \frac{1}{q} (2\cl N_{\gamma_{\F_q}(\psi_q)/8n}(\phi;\Q)+1) \gamma_{\Z}(\phi),\] for large primes $q,$ where $\psi_q \in \ol{\Ham}(M,\om)$ satisfies $(\psi_q)^q = \phi.$ For instance if the barcode counting function satisfies $\cl N_{\delta}(\phi;\Q) \leq C(\phi) \delta^{-\al},$ $\al > 0,$ for a constant $C(\phi)$ depending on $\phi$ then \[\gamma_{\F_q}(\psi_q) \leq C(\phi, \al, n) q^{-1/(1+\al)}\] for a constant $C(\phi, \al, n)$ depending only on $\phi, \al, n$ and $q$ large. According to \cite{CGG-bar}, if $\phi \in \Ham(M,\om)$ is smooth, then this estimate holds with $\al = 4n.$ An extension of Remark \ref{rmk: new} shows that for $\phi \in \ol{\Ham}(M,\om),$ $\cl N_{\delta}(\phi;\Q)$ can grow arbitrarily fast as $\delta \to 0.$
\end{rmk}

\begin{rmk}\label{rmk: completion}
The argument proving Theorem \ref{cor: SHS1} and Proposition \ref{prop: conv to zero} works verbatim for $\ol{\Ham}(M,\om)$ with the $C^0$ metric replaced by the image $\hat{\Ham}^{\Z,\Q}(M,\om)$ of the natural map $\hat{\Ham}^{\Z}(M,\om) \to \hat{\Ham}^{\Q}(M,\om)$ from the $\gamma_{\Z}$-completion of $\Ham(M,\om)$ to its $\gamma_{\Q}$-completion. (This map exists since $\gamma_{\Z} \geq \gamma_{\Q}$ on $\Ham(M,\om)$ by \cite{KS-Z}.) We consider this group equipped with the corresponding extension of $\gamma_{\Z}.$ It would be interesting to see if these arguments could be extended to the completion $\hat{\Ham}(M,\om)$ of the Hamiltonian group with respect to Hofer's metric.
\end{rmk}

\section{Outlook}

Besides the directions already described in Remarks \ref{rmk: manifolds} and \ref{rmk: mcg}, it would be interesting to extend Theorem \ref{cor: SHS1} to positively monotone or in fact general symplectic manifolds. Note that in this case one has to rule out faithful actions, since $\ol{\Ham}(M,\om)$ has finite subgroups even for $M=S^2.$ It would also be interesting to reduce new cases of the Hilbert-Smith conjecture for non-symplectic homeomorphisms to results already proven in the symplectic setting. Finally, in view of Proposition \ref{prop: conv to zero} and the general approach of \cite{PS}, it would be interesting to see if a Hamiltonian diffeomorphism, or homeomorphism, admitting roots of all orders is necessarily autonomous.
\appendix

\section{On actions of compact semisimple $p$-adic Lie groups}\label{app: Leonid}
\centerline{by Leonid Polterovich$^{1}$}
\footnotetext[1]{ Partially supported by the Israel Science Foundation grant 1102/20.}

\bigskip

\subsection{Invariant Hilbert-Smith groups}
Call a separable topological group $D$ {\em invariant} if its topology is induced by a bi-invariant metric.
We say that a topological group $D$ is {\it Hilbert-Smith} if it does not admit a continuous
monomorphism from the group of $p$-adic integers $\Z_p$.

\begin{thm}\label{thm-2}  Let $G$ be a compact semisimple $p$-adic analytic Lie group. Then every homomorphism $f: G \to D$, where $D$ is an invariant Hilbert-Smith group has finite image. In particular, if $\dim G \geq 1,$ then $f$ cannot be a monomorphism.
\end{thm}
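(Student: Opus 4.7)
I would proceed by contradiction: assuming $f: G \to D$ has infinite image, the plan is to produce a continuous monomorphism $\Z_p \hookrightarrow D$, contradicting the Hilbert-Smith hypothesis on $D$. The argument naturally divides into (i) showing $f$ is automatically continuous, and (ii) a short topological extraction of $\Z_p$ from the image.

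The core step is automatic continuity of $f$, which I view as a $p$-adic analogue of the classical van der Waerden--Kuranishi theorem for compact connected semisimple real Lie groups. I would first pass to an open uniform pro-$p$ subgroup $G_0 \subseteq G$, whose Lie algebra $\mathfrak{g}$ over $\Q_p$ remains semisimple, and hence perfect, and work with its congruence filtration $G_0 \supseteq G_1 \supseteq G_2 \supseteq \cdots$. Two ingredients will then be combined. First, via Lazard's theory, the perfectness of $\mathfrak{g}$ translates into a quantitative commutator-generation statement for the filtration: every element of $G_n$ decomposes as a uniformly bounded product of commutators whose factors lie in congruence subgroups of controlled depth. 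Second, bi-invariance of the metric $d$ on $D$ yields the elementary estimate $d([u,v],e) \leq 2\min(d(u,e),d(v,e))$ for all $u,v \in D$. An iteration marrying these two facts should force $f(G_n)$ into arbitrarily small neighborhoods of $e \in D$ as $n \to \infty$; this is continuity of $f$ at the identity, and hence continuity on all of $G$ by translation.

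Once $f$ is continuous, the kernel $K := \ker f$ is a closed normal subgroup of $G$, and by Lazard's quotient theory $G/K$ is again a compact $p$-adic analytic Lie group, with $\bar{f}: G/K \hookrightarrow D$ a continuous injective homomorphism. Since $f(G) \cong G/K$ is assumed infinite, $G/K$ has positive $p$-adic dimension, and therefore contains a closed subgroup isomorphic to $\Z_p$---for instance, the closure of the cyclic subgroup generated by any non-torsion element, or the image under the $p$-adic exponential of a one-dimensional Lie subalgebra. The composition $\Z_p \hookrightarrow G/K \xrightarrow{\bar{f}} D$ is then the sought continuous monomorphism, a contradiction. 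The concluding assertion about monomorphisms when $\dim G \geq 1$ follows immediately, since such $G$ is infinite and a monomorphism must have infinite image.

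The hard part will be the automatic continuity step. Semisimplicity of $G$ is essential: without it, $G$ would admit nontrivial abelian continuous quotients, and discontinuous homomorphisms into abelian bi-invariantly metrized targets could be manufactured by Hamel-basis arguments exactly as in the remark in the main body of the paper, where $\Q_p$ is shown to be abstractly $\Q$-isomorphic to $\R$. The technical heart of the proof is thus the translation of Lie-algebraic semisimplicity into a filtration-commutator decomposition of $G_0$ strong enough to propagate metric smallness against the rather weak bi-invariant commutator bound on $D$.
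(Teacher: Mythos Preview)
Your overall architecture matches the paper's exactly: establish automatic continuity of $f$, pass to the quotient $G/K$, and if it has positive dimension extract an embedded $\Z_p$ via the exponential map to contradict the Hilbert--Smith hypothesis. The paper packages step (i) differently: it first reduces to the case of a \emph{simple} $p$-adic analytic Lie group, invokes \cite[Theorem 3.3]{PSST} to the effect that such $G$ is ``norm complete,'' and then quotes the automatic continuity machinery of \cite[Propositions 1.13--1.14]{PSST} as a black box; the semisimple case is then handled by passing to a finite-index product of simple factors. Your choice to work directly with semisimple $G$ via perfectness of $\mathfrak{g}$ is a reasonable alternative and likely close in spirit to what \cite{PSST} does internally.

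There is, however, a genuine gap in your sketch of automatic continuity. You propose to combine (a) a quantitative commutator decomposition of $G_n$ coming from Lazard theory with (b) the bi-invariant estimate $d([u,v],e)\leq 2\min(d(u,e),d(v,e))$, and then ``iterate.'' But this iteration has no base case: the commutator bound only produces small elements in $D$ from elements already known to be small, and a priori you have no control on $d(f(g),e)$ for any $g\neq 1$. What is missing is the \emph{separability} of $D$, which is part of the paper's definition of ``invariant'' and which you never invoke. In the standard automatic-continuity arguments (and in \cite{PSST}), separability of the target is what furnishes the starting point---via a pigeonhole/Steinhaus-type step one shows that $f^{-1}$ of some small ball meets a fixed open subgroup in a large set, and only then does the commutator machinery take over. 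Without separability the conclusion is actually false in general (discontinuous homomorphisms to nonseparable bi-invariantly metrized groups can be built by Hamel-basis tricks of exactly the kind you allude to). So your plan is correct, but you should flag separability as the ingredient that seeds the iteration.
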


The key point is that the monomorphism is not assumed to be continuous. 
This result follows directly from \cite{PSST} and the main objective of this text is to help the reader to navigate through this paper.

\begin{exa}{\rm The group $\Ham(M,\om)$ of Hamiltonian diffeomorphisms of a closed symplectically aspherical manifold $(M,\om)$ with the Hofer metric is invariant Hilbert-Smith. Indeed, the separability of $\Ham(M,\om)$ with the Hofer metric is proved in \cite[Proof of Theorem 1.9]{KLM}. The non-existence of continuous monomorphisms of $\Z_p$ into $\Ham(M,\om)$ is the statement of Theorem \ref{thm: Hofer}, which could be interpreted as a Hofer-geometric symplectic Hilbert-Smith conjecture. When a compact semisimple $p$-adic analytic Lie group has no torsion, the statement of Theorem \ref{thm-2} seems to be new even in this {\it smooth} context. The existence of torsion in $\Ham(M,\om)$ is ruled out in \cite{P-nt}. Thus, in fact, Theorem \ref{thm-2} rules out {\em all} homomorphisms from compact semisimple $p$-adic analytic Lie group to this group. Another more technical example is the subject of Remark \ref{rmk: completion}.}
\end{exa}



\begin{rmk}\label{rem-nonc}  If $G$ is any (not necessarily compact) semisimple $p$-adic analytic Lie group, the kernel of any homomorphism $f: G \to D$ is open. This can be seen by restricting $f$ to an open compact subgroup existing by \cite[ Theorem 8.1]{Dixon}, and applying Theorem \ref{thm-2}. In particular, $f$ factors through a discrete group.
\end{rmk}

\begin{exa}\label{exam-main}  An example of a compact simple $p$-adic analytic Lie group is given by $SO(3,\Q_p)$, the group of orthogonal transformations with respect to a specially chosen definite quadratic form of $3$ variables on $\Q_p$. This group is studied in detail in \cite{DMPSW}.
It coincides with $SO(3,\Z_p)$, and hence is compact. By \cite[Corollary 4.3, Theorem 4.5]{Dixon}, it contains a torsion-free subgroup $G$ which is itself a compact simple $p$-adic analytic Lie group. We conclude that $G$ cannot act by Hamiltonian diffeomorphisms on closed aspherical symplectic manifolds. It is instructive to compare this to the obstructions appearing in the earlier literature. The paper \cite{P-nt} rules out actions of groups containing torsion and, roughly speaking,  unipotent elements (see \cite[1.6.H,1.6.L]{P-nt}). The latter do not exist in $G$ since every element of  $SO(3,\Q_p)$ is diagonalizable over the algebraic closure of $\Q_p$. Furthermore, Theorem 1.9 in \cite{KLM} rules out actions on {\it all} closed symplectic manifolds of semisimple algebraic group over an uncountable algebraically closed field. It is unknown to us whether $SO(3,\Q_p)$ can effectively act on a closed symplectic manifold,
as the field $\Q_p$ is not algebraically closed. 
\end{exa}

For other obstructions to abstract group actions by Hamiltonian diffeomorphisms, see eg \cite{P-nt, KLM}.

\subsection*{Acknowledgment} I thank Egor Shelukhin for all his help with this appendix, Yehuda Shalom for explaining to me Remark \ref{rem-nonc}, and Jarek Kedra for a useful comment leading to Example \ref{exam-main}.

\subsection{The glossary} We collect a couple of definitions from the theory of $p$-adic analytic Lie groups. A $p$-adic analytic Lie group $G$ is
a $p$-adic analytic manifold equipped with $p$-adic analytic group operations.
The Lie algebra $\mathfrak{g}$ is the tangent space to $G$ at the identity equipped with the Lie bracket. Let us mention that  the exponential map $\exp$
is not defined on the whole $\mathfrak{g}$, but it is an analytic injection of a neighborhood of $0$ in $\mathfrak{g}$ to $G$  (see \cite[Corollary 18.19]{Sch}).

We say that $G$ is {\it (semi)simple} if its Lie algebra is (semi)simple, i.e., the direct sum of simple algebras, where a Lie algebra is simple if it is non-abelian and has no non-trivial ideals. The terminological caveat is that a simple $p$-adic analytic Lie group is not a simple group: it has non-trivial normal subgroups.

\subsection{Proof of Theorem \ref{thm-2}}
First assume that $G$ is simple. By \cite[Theorem 3.3]{PSST}, $G$ is norm complete. Therefore, by the automatic continuity property (see \cite[Proposition 1.13]{PSST} and \cite[Proposition 1.14]{PSST}) $f$ must be continuous. Set $K = \text{Ker}(f)$. Then $K$ is a closed normal subgroup of $G$ and hence $G/K$ is a compact $p$-adic analytic Lie group by \cite[Theorem 9.6]{Dixon}. Moreover $f$ descends to a continuous embedding $G/K \to D.$ By a basic property of the exponential map we see that if $G/K$ has dimension $\geq 1$, then it contains an embedded copy of $\Z_p$. Indeed by \cite[Corollary 18.19]{Sch} the exponential map is a homeomorphism from a small open neighborhood of $0$ to its image and this neighborhood contains a copy of $\Z_p.$ However, this contradicts the Hilbert-Smith property of $D$. Thus, $G/K$ is $0$-dimensional, and hence finite. Hence $f$ has finite image.
If $G$ is semisimple, by \cite[Proof of Theorem 3.3]{PSST} it has a finite-index subgroup $G' = \prod_{i=1}^N G'_i$ which is a direct product of finitely many compact simple $p$-adic analytic groups $G'_i.$ Hence $f(G')$ is finite, since $f(G'_i)$ are finite, and hence $f(G)$ is finite. This finishes the proof of the first statement. 
Furthermore if $\dim G \geq 1,$ then $G$ is infinite while $f$ has finite image. Hence $f$ cannot be a monomorphism.
\qed

\bibliographystyle{plainurl}
\bibliography{bibliographySHS}
\noindent\\

\end{document}